\newtheorem{theorem}{Theorem}[section]
\newtheorem{lemma}[theorem]{Lemma}
\newtheorem {proposition}[theorem]{Proposition}
\newtheorem {corollary} [theorem] {Corollary}
\theoremstyle{definition}
\newtheorem{definition}[theorem]{Definition}
\newtheorem{example}[theorem]{Example}
\newtheorem{problem}[theorem]{Problem}
\newtheorem{remark}[theorem]{Remark}
\numberwithin {equation}{section}
\def\cs{{$C^{\ast}$}}
\def\ra{{\rightarrow}}
\def\CG{{\mathbb{C}G}}
\def\q{{\mathbb{Q}}}
\def\z{{\mathbb{Z}}}
\def\n{{\mathbb{N}}}
\def\c{{\mathbb{C}}}
\def\r{{\mathbb{R}}}
\def\csr{{C^{\ast}_r}}
\def\ill{{^\infty_L}}
\def\H{{\mathcal{H}}}
\def\gh{{(G,H)}}
\def\f{{\varphi}}
\def\an{{\lhd_a}}
\def\wrt{{with respect to }}
\def\ba{{\backslash}}
\begin{document}
\title{Property (RD) for Hecke pairs}

\author{Vahid Shirbisheh}
\email{shirbisheh@gmail.com}

\subjclass[2010]{Primary 46L89; Secondary 46H30, 20C08, 47L40.}

\keywords{Noncommutative geometry, $K$-theory, smooth subalgebras of \cs-algebras, Hecke pairs, Hecke \cs-algebras, length functions, property (RD) (rapid decay).}

\begin{abstract}
As the first step towards developing noncommutative geometry over Hecke \cs-algebras, we study property (RD) (Rapid Decay) for Hecke pairs. When the subgroup $H$ in a Hecke pair $\gh$ is finite, we show that the Hecke pair $\gh$ has (RD) if and only if $G$ has (RD). This provides us with a family of examples of Hecke pairs with property (RD). We also adapt Paul Jolissant's works in \cite{j1, j2} to the setting of Hecke \cs-algebras and show that when a Hecke pair $(G,H)$ has property (RD), the algebra of rapidly decreasing functions on the set of double cosets is closed under holomorphic functional calculus of the associated (reduced) Hecke \cs-algebra. Hence they have the same $K_0$-groups.
\end{abstract}
\maketitle
\footnote{The final publication is available at springerlink.com.}

\section {Introduction}
\label{sec:intro} Let $H$ be a subgroup of an arbitrary group $G$. It is called {\it almost normal in $G$} and is denoted by $H\an G$ if for every $g\in G$, the double coset $HgH$ is a finite union of its left cosets. A pair $(G,H)$ as above is called a {\it Hecke pair}. Elementary examples of almost normal subgroups are normal
subgroups, finite subgroups and subgroups of finite index. Besides
these examples, Hecke pairs have been appeared for the first time in
the theory of modular forms by considering $G=GL(2,\q)^+:=\{g\in
GL(2,\q); \det(g)>0\}$ and $H=SL(2,\z)$. We refer the reader to
Proposition 1.4.1 of \cite{bump}, for a proof of the fact that $H\an
G$. Hecke \cs-algebras were used by Jean-Beno\^{\i}t Bost and
Alain Connes, in \cite{bc}, in order to construct a \cs-dynamical
system illustrating  the class field theory of the field $\q$ of
rational numbers. Their Hecke pair consists of the group
$P_{\q}^+=\left\{ \left(
\begin{array} {rr}1&b\\0&a
\end{array}\right); a,b\in \q \,\text{and}\, a>0\right\}$ and its
subgroup $P^+_{\z}=\left\{ \left( \begin{array} {rr}1&n\\0&1
\end{array}\right); n\in \z \right\}$.
All groups in the above examples are considered as discrete groups.
In the setting of locally compact topological groups, one easily
observes that every compact open subgroup of a locally compact group
is almost normal. In fact, Kroum Tzanev has shown that every Hecke
pair can be ``replaced'' with a Hecke pair of this type and the
associated enveloping Hecke \cs-algebra would not change, see
Theorem 4.2 of \cite{tzanev}.

In this paper, we restrict our attention to the case that $G$ is a
discrete group. The set of all double cosets of a Hecke pair $(G,H)$
is denoted by $G//H$ and $\H(G,H)$ denotes the vector space of
finite support complex functions on $G//H$. An arbitrary set of
representatives of right cosets (resp.  double costs) of $H$ in $G$ is
denoted by $<H\ba G>$ (resp. $<G//H>$).

\begin{definition} Let $\gh$ be a Hecke pair. The vector space $\H \gh$ equipped with the following
convolution like product
\[
f_1 * f_2 (g):=\sum_{\gamma \in <H\ba G>} f_1(\gamma) f_2(\gamma^{-1}g), \quad f_1, f_2
\in \H(G,H).
\]
and the involution
\[
f^*(g):=\overline{f(g^{-1})}
\]
is called the {\it Hecke algebra of $(G,H)$}.
\end{definition}
Let $\c(H\ba G)$ denote the vector space of finite support complex
functions on the set of right cosets of $H$ in $G$. By extending the
definition of the above convolution product, it is endowed with an
$\H\gh$-module structure as follows:
\begin{eqnarray*}
\H\gh\times \c(H\ba G) &\ra& \c(H\ba G)\\
f_1 * f_2 (g)&:=&\sum_{\gamma\in <H\ba G>} f_1(\gamma) f_2(\gamma^{-1}g),
\end{eqnarray*}
for all $g\in H\ba G$. This gives rise to a $\ast$-representation $\lambda: \H(G,H) \ra
\mathcal{B}(\ell^2(H\ba G))$ defined by left convolutions;
\begin{equation}
\label{f:regrep} \lambda(f)(\xi)(g):=(f*\xi)(g)=\sum_{\gamma\in <H\ba G>}
f(\gamma) \xi(\gamma^{-1}g),
\end{equation}
for all $f \in \H(G,H)$, $\xi \in \ell^2(H\ba G)$ and $g\in H\ba G$. Due to the fact that each double coset is the union of only finitely many right cosets, one easily checks that every function in $\H(G,H)$ is mapped to a bounded operator by $\lambda$. We call this map the {\it regular representation of the Hecke pair $(G,H)$}. For $f\in \H(G,H)$, the norm of $\lambda(f)$ in $\mathcal{B}(\ell^2(H\ba G))$ is called the {\it convolution norm of $f$} and is denoted by $\|\lambda(f) \|$.
\begin{definition}
The norm closure of the image of the regular representation of a
Hecke pair $(G,H)$ is called the {\it reduced Hecke \cs-algebra of
$(G,H)$} (or shortly Hecke \cs-algebra of $(G,H)$) and is denoted by
$\csr\gh$.
\end{definition}
We refer the reader to Rachel Hall's thesis, \cite{Hall}, for the
definition of full Hecke \cs-algebras and a study of the problem of
the existence of full Hecke \cs-algebras. We also refer the reader
to \cite{tzanev} for the definition of the enveloping \cs-algebra of
a Hecke pair. The Hecke algebra $\H\gh$ also acts on $\ell^2(G/H)$ by right convolution, see \cite{Hall} for details.

It is clear that when $H$ is normal in $G$, the above definitions
coincide correspondingly with the definitions of the group algebra,
the convolution product and norm, the regular representation, and
the reduced group \cs-algebra of the quotient group $G/H$. This
point of view motivates our program to generalize the concepts and
tools of noncommutative geometry over group \cs-algebras to the more
general setting of Hecke \cs-algebras. We note that Kroum Tzanev
started a similar program to reformulate the Baum-Connes conjecture in
the setting of Hecke \cs-algebras in his thesis, see
\cite{tzanevthesis}. In the following, we explain why we begin with
the study of property (RD) for Hecke pairs.

Many notions of noncommutative geometry over \cs-algebras are
defined on a specific type of dense subalgebras of \cs-algebras,
often called {\it smooth subalgebras of \cs-algebras}. The name
comes from the commutative case that the algebra $C^\infty(M)$ of
smooth complex functions on a compact smooth manifold $M$ is dense
in the \cs-algebra $C(M)$ of continuous complex functions on $M$.
The main feature of these smooth subalgebras is that they are closed
under holomorphic functional calculus of the containing
\cs-algebras.

\begin{definition} An involutive dense subalgebra $\mathcal{A}$ of a
\cs-algebra $A$ is called {\it smooth} if for every element $a\in
\mathcal{A}$ and every holomorphic function $f$ defined over an open
set containing the spectrum of $a$ in $A$, $f(a)$ belongs to
$\mathcal{A}$. In this case, we also say that $\mathcal{A}$ is {\it
closed under holomorphic functional calculus of $A$}.
\end{definition}
\begin{remark}
If $\mathcal{A}$ is a smooth subalgebra of a \cs-algebra $A$, then
it has the {\it property of spectral permanence}, see \cite{ar}, namely
the spectrum of each element of $\mathcal{A}$ in $\mathcal{A}$ is
the same as its spectrum in $A$, see proposition 3.1.3 in \cite{bl}.
The converse is true if $\mathcal{A}$ is endowed with a topology for
which the Cauchy integral of holomorphic functional calculus
converges. For instance, if $\mathcal{A}$ is an involutive dense
Banach algebra and has the property of spectral permanence in $A$,
then it is a smooth subalgebra of $A$. This was noted by
Jean-Beno\^{\i}t Bost in the discussion after Theorem 1.3.1 of
\cite{b}.
\end{remark}
If $M_n(\mathcal{A})$ is a smooth subalgebra of $M_n(A)$ for every
positive integer $n$, then $\mathcal{A}$ is called a {\it local
\cs-algebra}. This property ensures that $\mathcal{A}$ is similar
enough to the \cs-algebra $A$ to carry many features of $A$. For
example, both have the same $K_0$-groups, see \cite{bl}.

\begin{remark}
In the appendix of \cite{b}, J.-B. Bost introduced an interesting method to show under some minor conditions spectral permanence implies the equality of $K$-groups too. Clearly, his method is more general than using stability under holomorphic functional calculus, because it works in the setting of Fr\'{e}chet algebras. Reader can find more details in Theorem A.2.1 of \cite{b} about his approach.
\end{remark}

The main reason for constructing smooth subalgebras inside \cs-algebras is that working with smooth subalgebras instead of \cs-algebras has the advantage that they are more capable of algebraic constructions like Connes' cyclic cohomology and geometric constructions like Connes' spectral triple, \cite{c85, c94}. Therefore, in many situations, the first step to study the noncommutative geometry of a \cs-algebra is to define a smooth
subalgebra.

There are many ways to define smooth subalgebras of \cs-algebras. For example, the subalgebra of smooth elements with respect to an action of a Lie group on a \cs-algebra is smooth. For more general constructions of smooth subalgebras using differential seminorms and derivations, see \cite{blcu}. Whenever the \cs-algebra under consideration is related to a group, for example group \cs-algebras and crossed product \cs-algebras, harmonic analysis provides us with a method to define smooth subalgebras. From Fourier analysis one knows that the algebra of rapidly decreasing (also called Schwartz)
functions on $\mathbb{Z}$ is isomorphic to the algebra $C^\infty(\mathbb{T})$ of smooth functions on the unit circle $\mathbb{T}\subset\c$. Since $C^\infty(\mathbb{T})$ is a smooth subalgebra of $C(\mathbb{T})$ and this \cs-algebra is isomorphic to $C^\ast(\z)$, one can consider the algebra of rapidly decreasing functions on $\z$ as a smooth subalgebra of $C^\ast(\z)$. The idea of considering the subalgebra of rapidly decreasing functions on a group as the smooth subalgebra of its reduced group \cs-algebra was generalized by Paul Jolissaint for groups possessing property (RD) in \cite{j1, j2}.

The main purpose of this article is to show, in details, that this idea works similarly for Hecke pairs and Hecke \cs-algebras. Therefore, our main result is that when a Hecke pair $\gh$ possesses property (RD) with respect to a length function $L$, the subalgebra of rapidly decreasing functions on the set of double cosets with respect to $L$ is a smooth subalgebra of $\csr\gh$, see Proposition \ref{pro:HT} and results after that. As the first application of this fact, we show that these algebras have the same $K_0$-groups, see Corollary \ref{cor:Kgroups}. These are done in Section \ref{sec:rdfunctions}. Our proofs are a modification of the original proofs, given by Paul Jolissaint, for the general framework of Hecke \cs-algebras.

Section \ref{sec:RD} is devoted to the definition of property (RD) and equivalent definitions. Some remarks and complementary discussions are also given in Section \ref{sec:RD}. In order to give some examples of non-trivial Hecke pairs with property (RD), we show, in Theorem \ref{thm:fsub}, that if $H$ is a finite subgroup of a group $G$, then the Hecke pair $\gh$ has (RD) if and only if the group $G$ has (RD).
\section{Property (RD)}
\label{sec:RD} The definition of property (RD) for a group is based on
the notion of a length function. For a discrete group $G$, a {\it
length function on $G$} is a function $L: G \ra [0,\infty[$ which
satisfies following conditions for all $g,h \in G$:
\begin{itemize}
\item [(i)] $L(gh)\leq L(g)+L(h)$,
\item [(ii)] $L(g)= L(g^{-1})$,
\item [(iii)] $L(1)=0$.
\end{itemize}
A length function $L$ on $G$ gives rise to a {\it weighted
$\ell^2$-norm}, for every $s>0$, as follows:
\[
\parallel f \parallel_{s,L}:= \left( \sum_{g\in G} \mid f(g)\mid^2
(1+L(g))^{2s}\right)^{\frac{1}{2}}, \qquad \forall f\in\c G.
\]
Let $\lambda :\CG \ra \mathcal{B}(\ell^2(G))$ be the left regular
representation of $G$. The {\it convolution norm} of a function $f$
in $\CG$ is defined by $\parallel \lambda(f) \parallel$.
\begin{definition} \label{def:RD}  We say a group $G$ has {\it property (RD)}
if there exist a length function $L$ on $G$ and  positive real
numbers $C$ and $s$ such that the {\it Haagerup inequality};
\begin{equation}
\label{eq:Hineq1}
\parallel \lambda(f) \parallel\leq C\parallel f \parallel_{s,L}
\end{equation}
holds for all $f\in \c G$.
\end{definition}
Uffe Haagerup introduced and proved Inequality \ref{eq:Hineq1}, for $C=s=2$, for free groups of rank $n\geq 2$ equipped with the word length function in \cite{haag1}. Afterwards, Jolissaint gave the formal definition of property (RD) and proved several statements related to this property and discussed several examples of groups possessing this property in \cite{j2}. He also showed that in the presence of property (RD) the subalgebra of rapidly decreasing functions is smooth in the reduced group \cs-algebra, \cite{j1}. Pierre de la Harpe proved property (RD) for hyperbolic groups in \cite{harpe}. As it was noted in \cite{cm, j1}, when a discrete group $G$ has property (RD) a cyclic cocycle of $\c G$, (an element of the cyclic cohomology group of $\c G$), satisfying some  additional conditions extends to an $n$-trace over $\csr(G)$. Thus, it defines an index map $K_0(\csr(G))\ra \c$, (in the terminology of noncommutative geometry and cyclic cohomology). This phenomenon was used by Alain Connes and Henri Moscovici to prove the Novikov conjecture for hyperbolic groups in \cite{cm}. Property (RD) also appears in Vincent Lafforgue's proof of the Baum-Connes conjecture for a new family of groups, see \cite{l1} for more details. More recently, property (RD) has been applied to define and study noncommutative metrics over the state spaces of several reduced group \cs-algebras by Cristina Antonescu and Erik Christensen in \cite{ach}. This latter application of property (RD) is another motivation of the present work and will be discussed in our subsequent paper. We also note that property (RD) for discrete quantum groups was studied by Roland Vergnioux in \cite{ve}.

\begin{remark}
\label{rem:amen} Jolissaint has shown that when $G$ is an amenable group, $G$ has (RD) if and only if $G$ is of polynomial growth, see Corollary 3.1.8 of \cite{j2}. This is the only obstruction known for property (RD) yet. In Proposition 6 of \cite{valette}, Alain Valette refined this result to investigate a family of groups which are not amenable.
\end{remark}

Now, we extend the definition of property (RD) to Hecke pairs, and later on we will use this notion to define smooth subalgebras in Hecke \cs-algebras.

\begin{remark}
\label{rem:NL} Given a length function $L$ on $G$, It is easy to check that $N_{L}:=\{g\in G; L(g)=0\}$ is a subgroup of $G$. Let $H$ be a subgroup of $N_{L}$. Then, for every $h_1, h_2 \in H$ and $g\in G$, we have $L(g)=L(h_1^{-1}h_1g)\leq L(h_1^{-1})+L(h_1g)=L(h_1g)\leq L(h_1) +L(g)=L(g)$, and consequently $L(g)=L(h_1g)$. Similarly, one can show that $L(g)=L(h_1 g h_2)$.
\end{remark}
This observation leads us to the following definition:
\begin{definition} A {\it length function on a Hecke pair (G,H)} is a
length function $L$ on $G$ such that $H \leq N_{L}$.
\end{definition}
Similar to group algebras, for every $s>0$, we define the {\it
weighted $\ell^2$-norm associated with $s$ and $L$} on finite support functions on
the set $G//H$ of double cosets as follows
\[
\parallel f \parallel_{s,L}:= \left( \sum_{g\in <H\ba G>} \mid f(g)\mid^2
(1+L(g))^{2s}\right)^{\frac{1}{2}}, \qquad \forall f:G//H\ra \c.
\]
\begin{definition}
\label{def:HRD} We say a Hecke pair $(G,H)$ has {\it property (RD)} if
there exist a length function $L$ on $(G,H)$ and positive real
numbers $C$ and $s$ such that the Haagerup inequality;
\begin{equation}
\label{eq:Hineq2}
\parallel \lambda(f) \parallel\leq C\| f \|_{s,L}
\end{equation}
holds for all $f\in \H(G,H)$.
\end{definition}

\begin{remark}
\label{r:altdef}
For $f\in \H\gh$, we defined property (RD) based on the norm $\|f\|_{s,L}$ which uses a sum over right cosets. Alternatively, one can define property (RD$^\prime$) using the norm $\parallel f \parallel_{s,L}^{\prime}:= \left( \sum_{g\in <G//H>} \mid f(g)\mid^2 (1+L(g))^{2s}\right)^{\frac{1}{2}}$. Clearly, we have $\|-\|^\prime_{s,L}\leq \|-\|_{s,L}$, so property (RD$^\prime$) \wrt a length function $L$ implies property (RD) \wrt $L$.
\end{remark}

\begin{proposition}
If a Hecke pair $\gh$ has property (RD$^\prime$) \wrt a length function $L$ then there are positive constants $D$ and $t$ such that $| HgH/Hg |\leq D(1+L(g))^{t}$ for all $g\in G$.
\end{proposition}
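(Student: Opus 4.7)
The plan is to test the Haagerup inequality (RD$^\prime$) on the characteristic function $f := \chi_{HgH}$ of a single double coset, regarded as an element of $\H\gh$. The weighted-norm side is immediate: $f$ is supported on one double coset with value $1$ there, and $L$ is constant on $HgH$ by Remark~\ref{rem:NL}, so $\|f\|_{s,L}^\prime = (1+L(g))^s$.

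The main task is to extract a lower bound on $\|\lambda(f)\|$ that involves the cardinality $|HgH/Hg|$. I would test $\lambda(f)$ on the unit vector $\delta_H \in \ell^2(H\ba G)$, the indicator of the trivial right coset, and use the standard identity
\[
\|\lambda(f)\delta_H\|_2^2 \;=\; \langle \lambda(f^\ast \ast f)\delta_H,\,\delta_H\rangle \;=\; (f^\ast \ast f)(e).
\]
Here the second equality reduces to the observation that for any $F\in \H\gh$ the convolution formula gives $\lambda(F)\delta_H(H) = \sum_{\gamma \in <H\ba G>} F(\gamma)\delta_H(\gamma^{-1}) = F(e)$, because among the chosen right coset representatives only the one for the coset $H$ itself lies in $H$. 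Evaluating the right-hand side on $f = \chi_{HgH}$ (so that $f^\ast = \chi_{Hg^{-1}H}$) one sees that $(f^\ast \ast f)(e) = |Hg^{-1}H/Hg|$, since the nonzero summands are precisely those with $\gamma \in Hg^{-1}H$, each contributing $1$.

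Combining the two sides, $|Hg^{-1}H/Hg| \leq \|\lambda(f)\|^2 \leq C^2(1+L(g))^{2s}$; since $L$ is symmetric, replacing $g$ by $g^{-1}$ produces the claim with $D = C^2$ and $t = 2s$. There is no deep obstacle — the argument is the natural analogue of the classical proof that (RD) on a group forces polynomial growth of the balls — and the only bookkeeping point is reconciling the bi-$H$-invariance of $f$ with the sum over right coset representatives in the computation of $(f^\ast \ast f)(e)$.
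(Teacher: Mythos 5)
Your proof is correct and follows essentially the same route as the paper: both test the Haagerup inequality for (RD$^\prime$) on the characteristic function of a single double coset and apply $\lambda$ of it to the unit vector $\delta_H\in\ell^2(H\ba G)$, using that $\|\chi_{HgH}\|^\prime_{s,L}=(1+L(g))^s$. The only difference is bookkeeping: the paper observes directly that $\delta_g\ast\delta_1=\delta_g$ in $\ell^2(H\ba G)$, so the lower bound $|HgH/Hg|=\|\delta_g\|_2^2$ appears immediately, whereas you route through $(f^\ast\ast f)(e)$, which counts the right cosets of $Hg^{-1}H$ and therefore forces the (harmless) final substitution $g\mapsto g^{-1}$.
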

First, we note that, in the presence of the above inequality, property (RD) clearly implies property (RD$^\prime$).
\begin{proof}
Let $\gh$ has property (RD$^\prime$) \wrt $L$ and with some positive constants $C$ and $s$. For $g\in G$, let $\delta_g \in \H\gh$ denote the characteristic function of the double coset $HgH$ and let $\delta_1 \in \c(H\ba G)$ denote the characteristic function of the right coset $H$. Then we have $| HgH/Hg |=\|\delta_g\|_2^2=\|\delta_g\ast \delta_1\|_2^2\leq \|\lambda(\delta_g)\|^2\leq C^2\|\delta_g\|_{s,L}^2 = C^2(1+L(g))^{2s}$, where the norm $\|-\|_2$ is the norm of $\ell^2(H\ba G)$.
\end{proof}

We do not try to verify all statements for both property (RD) and property (RD$^\prime$). But one can easily check that the content of Section \ref{sec:rdfunctions} holds similarly for property (RD$^\prime$). To conform the content of the rest of this section with property (RD$^\prime$), one should modify Proposition \ref{pro:def} according to this property.

\begin{example}
\begin{itemize}
\item[(i)] When $H$ is normal in $G$, one notes that property (RD) for a Hecke pair $(G,H)$ coincides with property (RD) of $G/H$, in particular when $H=\{e\}$.
\item[(ii)] Let $H$ be a subgroup of $G$ of finite index, say $n$. Then, using Proposition 6.12 of \cite{fo}, we have $\|-\|_1^2\leq n \|-\|_2^2 = n \|-\|_{s,L}^{2}$, where $L$ is the constant zero length function, $s$ is any positive real number and the norms $\|-\|_1$ and $\|-\|_2$ are $\ell^1$ and $\ell^2$ norms of $H\ba G$, respectively. Given $f\in \H\gh$, $f$ is an element of $\c(H\ba G)$ too and by repeating the proof of Young's inequality in our setting, see Proposition 8.7 of \cite{fo}, we have $\|\lambda(f)\|\leq \|f\|_1$. Therefore, in this case, the Haagerup inequality \ref{eq:Hineq2} holds for $L=0$, $C=\sqrt{n}$ and any positive real number $s$.
\end{itemize}
\end{example}

One possible application of property (RD) for Hecke pairs is the
following problem which is motivated by Remark \ref{rem:amen}.

\begin{problem}
Define the notion of the growth of a Hecke pair, compatible with the definition of growth for groups. Find a relationship between the amenability of Hecke pairs as defined by Tzanev in \cite{tzanev} and property (RD) of Hecke pairs. Look for examples of non-amenable Hecke pairs using this relationship.
\end{problem}

There are several conditions equivalent to property (RD) which can be generalized to the setting of Hecke pairs and are necessary for our discussion. First, we need some notations. The subsets of non-negative real functions in $\H\gh$, $\c(H\ba G)$, and $\c G$ are denoted by $\H_+\gh$, $\r_+(H\ba G)$, and $\r_+ (G)$, respectively. For a length function $L$ on a Hecke pair $\gh$ and for every non-negative real number $r$, we define
\[
B_{r,L}\gh:= \{ HgH\in <G//H>; L(g)\leq r\},
\]
\[
C_{r,L}\gh:= \{ HgH \in <G//H>; r\leq L(g)< r+1\}.
\]
We also denote the similar sets in $<H\ba G>$ and $G$ by $B_{r,L}(H\ba G)$, $C_{r,L}(H\ba G)$ and  $B_{r,L}(G)$, $C_{r,L}(G)$, respectively. For $f\in \c(H\ba G)$ or $f\in \H\gh$, the norm of $f$ in $\ell^2(H\ba G)$ is denoted by $\|f\|_2$.

\begin{proposition}\label{pro:def}
Let $L$ be a length function on a Hecke pair $\gh$. Then the following statements are equivalent:
\begin{itemize}
\item [(i)] The Hecke pair $\gh$ has property (RD) \wrt $L$.
\item [(ii)] There exists a polynomial $P$ such that for any $r>0$
and $f\in \H_+\gh$ so that $\text{supp} f\subseteq B_{r,L}\gh$, we have
\[
\|\lambda(f)\|\leq P(r) \| f\|_{2}.
\]
\item [(iii)] There exists a polynomial $P$ such that for any $r>0$,
$f\in \H_+\gh$ so that $\text{supp}f\subseteq B_{r,L}\gh$, and $k\in
\r_+(H\ba G)$, we have
\[
\|f\ast k\|_2\leq P(r)\| f\|_{2} \|k\|_{2}.
\]
\end{itemize}
\end{proposition}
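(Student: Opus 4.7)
The plan is to establish (i) $\Rightarrow$ (ii) $\Rightarrow$ (i), with (ii) $\Leftrightarrow$ (iii) following almost by definition of $\lambda$ as left convolution on $\ell^2(H\ba G)$. Three ingredients recur throughout: (a) the trivial bound $(1+L(\gamma))^{s}\le (1+r)^{s}$ on supports inside $B_{r,L}\gh$; (b) the pointwise inequality $|\lambda(f)\xi(g)|\le \lambda(|f|)|\xi|(g)$, which gives $\|\lambda(f)\|\le \|\lambda(|f|)\|$ and so lets us reduce everything to $f\in \H_+\gh$; and (c) a decomposition $f=\sum_{n\ge 0} f_n$ along the level sets $C_{n,L}\gh$ in the style of Jolissaint's argument for groups in \cite{j2}.

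The direction (i) $\Rightarrow$ (ii) is immediate. For $f\in\H_+\gh$ with $\text{supp}\,f\subseteq B_{r,L}\gh$, the hypothesis $H\le N_L$ (Remark \ref{rem:NL}) makes $L$ well-defined on double cosets, so the support condition transfers to right cosets and (a) gives $\|f\|_{s,L}\le (1+r)^{s}\|f\|_2$; the Haagerup inequality then yields $\|\lambda(f)\|\le C(1+r)^{s}\|f\|_2$, and $P$ is any polynomial dominating $C(1+r)^{s}$. For (ii) $\Leftrightarrow$ (iii): $\lambda(f)k=f\ast k$ for $k\in \ell^2(H\ba G)$, so (ii) gives $\|f\ast k\|_2\le P(r)\|f\|_2\|k\|_2$; conversely, applying (iii) to $k=|\xi|$ and using $|f\ast \xi|\le f\ast |\xi|$ (valid since $f\ge 0$) yields (ii).

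For the substantive step (ii) $\Rightarrow$ (i), reduce to $f\in\H_+\gh$ via (b) and decompose $f=\sum_{n\ge 0} f_n$ with $f_n:=f\cdot \chi_{C_{n,L}\gh}$. Each $f_n$ is supported in $B_{n+1,L}\gh$, so (ii) gives $\|\lambda(f_n)\|\le P(n+1)\|f_n\|_2$. Choose $s$ large enough that $\sigma:=\sum_{n\ge 0} P(n+1)^{2}(1+n)^{-2s}$ is finite (any $s>\deg P + 1/2$ works). Triangle inequality followed by Cauchy--Schwarz then gives
\[
\|\lambda(f)\|\le \sum_{n\ge 0} P(n+1)\|f_n\|_2 \le \sqrt{\sigma}\,\Bigl(\sum_{n\ge 0} (1+n)^{2s}\|f_n\|_2^{2}\Bigr)^{1/2}\le \sqrt{\sigma}\,\|f\|_{s,L},
\]
where the last step uses $L(\gamma)\ge n$ on $\text{supp}\,f_n$. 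This is (i) with constant $\sqrt{\sigma}$ and this choice of $s$.

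I do not anticipate any serious obstacle. The one point requiring care is the bookkeeping between $<H\ba G>$ and $<G//H>$: the weighted norm $\|\cdot\|_{s,L}$ and $\|\cdot\|_2$ are sums over right-coset representatives, while the sets $B_{r,L}\gh$ and $C_{n,L}\gh$ are indexed by double cosets. Because $L$ is constant on each double coset, a function in $\H\gh$ with double-coset support in $B_{r,L}\gh$, viewed inside $\c(H\ba G)$, has its right-coset support in $B_{r,L}(H\ba G)$, so the estimate in (a) applies as claimed. Once this compatibility is in place the argument is a direct transcription of Jolissaint's equivalence to the Hecke-pair setting.
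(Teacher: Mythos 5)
Your proof is correct and follows essentially the same route as the paper: the direction (i) $\Rightarrow$ (ii) via the trivial bound $(1+L(\gamma))^{s}\le(1+r)^{s}$, and (ii) $\Rightarrow$ (i) via the reduction $\|\lambda(f)\|\le\|\lambda(|f|)\|$, the decomposition of $f$ along the annuli $C_{n,L}\gh$, and Cauchy--Schwarz, differing only in how the constants are bookkept (you choose $s$ to make $\sum_n P(n+1)^2(1+n)^{-2s}$ converge directly, whereas the paper first dominates $P(n)$ by $Cn^{s-1}$). You also spell out the equivalence (ii) $\Leftrightarrow$ (iii), which the paper leaves to the reader, and your argument for it is the intended one.
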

\begin{proof}
Let $s,C$ be positive real numbers such that $\|\lambda(-)\|\leq
C\|-\|_{s,L}$ over $\H\gh$. For $r\geq0$, assume $f\in \H_+\gh$ is
supported in $B_{r,L}\gh$. Then we have
$$\|\lambda(f)\|\leq
C\|f\|_{s,L}=C\left( \sum_{g\in B_{r,L}(H\ba G)} |f(g)|^2 (1+L(g))^{2s}
\right)^{1/2}$$
$$\leq C\left( \sum_{g\in B_{r,L}(H\ba G)} |f(g)|^2 (1+r))^{2s}
\right)^{1/2}=C (1+r)^{s}\| f\|_2.$$
Hence (ii) follows from (i) by
considering $P(r)=C(1+r)^s$.

Conversely, Let (ii) hold for some polynomial $P$. One can easily find two positive numbers $C,s$ such that $P(n)\leq Cn^{s-1}$ for all $n\in \n$. Let $r$ be a non-negative real number and let $f\in\H\gh$ so that $\text{supp}f\subseteq B_{r,L}\gh$. It is easy to check that $\|\lambda(f)\|\leq \|\lambda(|f|)\|$. Combining this inequality with (ii) and the fact that $\|\, |f|\, \|_2= \| f\|_2$, we get $\|\lambda(f)\|\leq P(r)\| f\|_2$. Now, let $f\in \H\gh$ be arbitrary and let $\chi_n$ denote the characteristic function of $C_{n-1,L}\gh$ for all $n\in \n$. Then, $\|\lambda(f)\| = \|
\sum_{n=1}^\infty \lambda(f\chi_n)\| \leq \sum_{n=1}^\infty \|\lambda(f\chi_n)\| \leq \sum_{n=1}^\infty P(n)\| f\chi_n\|_2\leq \sum_{n=1}^\infty Cn^{s-1}\| f\chi_n\|_2=C \sum_{n=1}^\infty n^{-1} n^s \| f\chi_n\| _2$. By applying Cauchy-Schwartz inequality of $\ell^2(\n)$, we obtain $\|\lambda(f)\|\leq C \left( \sum_{n=1}^\infty n^{-2}\right)^{1/2} \left(\sum_{n=1}^\infty n^{2s}\| f\chi_n \|_2^2 \right)^{1/2}$. Setting $C^\prime := C \left( \sum_{n=1}^\infty n^{-2}\right)^{1/2}$, we have $\|\lambda(f)\|\leq C^\prime \left(\sum_{n=1}^\infty n^{2s}\| f\chi_n\| _2^2 \right)^{1/2} \leq C^\prime \left(\sum_{n=1}^\infty (1+L(g))^{2s}\sum_{g\in C_{n-1,L}(H\ba G)} |f(g)|^2 \right)^{1/2}$, where the last inequality follows from the facts that $n\leq L(g) +1$ for every $g\in C_{r-1,L}(H\ba G)$ and $\| f\chi_n\|_2^2=\sum_{g\in C_{n-1,L}(H\ba G)} |f(g)|^2$. Finally, Since $\bigcup_{n=1}^\infty C_{n-1, L}(H\ba G)=H\ba G$, we obtain $\|\lambda(f)\|\leq C^\prime \left( \sum_{g\in <H\ba G>}(1+L(g))^{2s} |f(g)|^2 \right)^{1/2}= C^\prime \|f\|_{s,L}$. The equivalence between (ii) and (iii) is easy and is left to the reader.
\end{proof}

The above proposition and its proof, in the setting of groups, appeared implicitly in \cite{haag1}, for instance, see Lemmas 1.4 and 1.5 in there. Condition (ii) in the above also appeared as the definition of property (RD) in \cite{l2}. The proof of this proposition is an adaptation of the proof for the case of groups which was taken from \cite{chat}, see also proposition 1.2 of \cite{chatru}.

For our discussion in the rest of this section, we need to recall some more definitions from \cite{j2} and set some notations. Let $L_1$ and $L_2$ be two length functions on some group $G$. We say $L_1$ {\it dominates} $L_2$ if there exist positive real numbers $a,b$ such that $L_2\leq a L_1+b$. In this case $1+L_2\leq (b+1) + a L_1$ and setting $M:=\max\{b+1, a\}$, we get $1+L_2\leq M(1+ L_1)$. This implies that $\|-\|_{s,L_2}\leq M^{s}\|-\|_{s,L_1}$
for all $s>0$. Thus if $G$ has property (RD) \wrt $L_2$ then  it has it \wrt $L_1$ as well. For instance, the word length function of a finitely generated group $G$ dominates other length functions of $G$, see Lemma 1.1.4 of \cite{j2}. So, if $G$ does not have (RD) \wrt the word length function it does not have (RD) \wrt other length functions either. If two length functions $L_1$ and $L_2$ dominate each other, we say $L_1$ and $L_2$ are {\it equivalent}. It is clear that the above discussion holds for Hecke pairs too. In particular, if a Hecke pair has property (RD) with respect to a length function $L$ it has property (RD) with respect to any length function equivalent to $L$.

The following theorem relates the property (RD) of groups and Hecke pairs consisting of groups and their finite subgroups.

\begin{theorem}
\label{thm:fsub}
Let $H$ be a finite subgroup of a group $G$. Then $G$ has property (RD) if and only if the Hecke pair $\gh$ has property (RD).
\end{theorem}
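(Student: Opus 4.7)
Since $H$ is finite, every double coset has at most $|H|^2$ elements, so each $f\in\H\gh$ extends to a finite-support bi-$H$-invariant function $\tilde f\in\c G$ via $\tilde f(g):=f(HgH)$, and conversely every finite-support $a\in\c G$ admits a bi-$H$-invariant dominant $a^+(g):=\max_{h_1,h_2\in H}|a(h_1 g h_2)|$. The crux of both directions is the norm-comparison identity
\[
\|\lambda_G(\tilde f)\|_{\mathcal{B}(\ell^2(G))} \;=\; |H|\,\|\lambda(f)\|_{\mathcal{B}(\ell^2(H\ba G))},\qquad f\in\H\gh,
\]
which I plan to establish by embedding $\ell^2(H\ba G)$ into $\ell^2(G)$ via $\iota(\xi)(g):=\xi(Hg)$ (a scaled isometry with $\iota^*\iota=|H|\cdot I$), using the right $H$-invariance of $\tilde f$ to see that $\lambda_G(\tilde f)$ vanishes on $\iota(\ell^2(H\ba G))^{\perp}$, and checking the operator identity $\lambda_G(\tilde f)\,\iota=|H|\,\iota\,\lambda(f)$ by a direct left-coset decomposition of the convolution sum $\sum_x\tilde f(x)\tilde\xi(x^{-1}g)$.

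For the direction ``$G$ has (RD) $\Rightarrow \gh$ has (RD)'', I start with a length function $L_0$ on $G$ verifying (RD) and replace it with the bi-$H$-symmetrized length function $L(g):=\min_{h_1,h_2\in H}L_0(h_1 g h_2)$. A routine triangle-inequality check gives that $L$ is a length function on $G$, that $L|_H\equiv 0$, and that $L_0-2M\leq L\leq L_0$ with $M:=\max_{h\in H}L_0(h)$, so $L$ is a length function on $\gh$ equivalent to $L_0$, and $G$ retains (RD) with respect to $L$ (with possibly adjusted constants $C,s$). Since $\tilde f$ is constant on each double coset and $L$ is bi-$H$-invariant by Remark~\ref{rem:NL}, grouping the sum defining $\|\tilde f\|_{s,L}^2$ by double cosets (each splitting into $|HgH/Hg|$ right cosets of $H$ of cardinality $|H|$) yields $\|\tilde f\|_{s,L}^2=|H|\,\|f\|_{s,L}^2$. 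Combining this with (RD) of $G$ applied to $\tilde f$ and the norm-comparison identity gives
\[
\|\lambda(f)\|\;=\;|H|^{-1}\|\lambda_G(\tilde f)\|\;\leq\;|H|^{-1}C\,\|\tilde f\|_{s,L}\;=\;\frac{C}{\sqrt{|H|}}\,\|f\|_{s,L},
\]
which is (RD) for $\gh$ with respect to $L$.

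For the converse, let $L$ witness (RD) of $\gh$ with constants $(C,s)$; then $L|_H\equiv 0$ and, by Remark~\ref{rem:NL}, $L$ is bi-$H$-invariant on $G$. Given $a\in\c G$ of finite support, the bi-$H$-invariant dominant $a^+$ is of finite support and corresponds to some $f_a\in\H_+\gh$. Pointwise domination (and the standard $\|\lambda_G(b)\|\leq\|\lambda_G(b')\|$ for $0\leq b\leq b'$) combined with the norm-comparison identity give $\|\lambda_G(a)\|\leq\|\lambda_G(a^+)\|=|H|\,\|\lambda(f_a)\|$. Applying (RD) of $\gh$, bounding $(\max_{h_1,h_2})^2\leq\sum_{h_1,h_2}$, invoking bi-$H$-invariance of $L$ to rewrite $(1+L(\gamma))^{2s}$ as $(1+L(h_1\gamma h_2))^{2s}$, and using the observation that the map $(\gamma,h_1,h_2)\mapsto h_1\gamma h_2$ is an $|H|$-to-one surjection from $<H\ba G>\times H\times H$ onto $G$, the triple sum collapses to $|H|\,\|a\|_{s,L}^2$, so $\|f_a\|_{s,L}^2\leq|H|\,\|a\|_{s,L}^2$. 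Altogether $\|\lambda_G(a)\|\leq C\,|H|^{3/2}\,\|a\|_{s,L}$, so $G$ has (RD) with respect to $L$. The main technical obstacle will be the operator identity $\lambda_G(\tilde f)\,\iota=|H|\,\iota\,\lambda(f)$ underlying the norm comparison, since the Hecke and group convolutions are normalized differently and one has to track the factors of $|H|$ coming from both the inclusion $\ell^2(H\ba G)\hookrightarrow\ell^2(G)$ and the coset decomposition of the group convolution.
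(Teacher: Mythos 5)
Your proposal is essentially correct and follows a genuinely different route from the paper. The paper never touches the Haagerup inequality directly: it first passes to the equivalent formulation (iii) of Proposition \ref{pro:def} and then compares the two settings purely through pointwise $\ell^2$-estimates, using $\tilde{f}(x)=f(HxH)$, $\tilde{k}(x)=k(Hx)$ in one direction and the averaged functions $\bar{f}(HgH)=\sum_{i,j}f(h_igh_j)$, $\bar{k}(Hg)=\sum_i k(h_ig)$ together with the pointwise domination $f\ast k\leq\tilde{\bar{f}}\ast\tilde{\bar{k}}$ in the other; all constants are tracked at the level of $\|\cdot\|_2$. You instead prove an honest operator-theoretic dilation statement: the scaled isometry $\iota:\ell^2(H\ba G)\to\ell^2(G)$ intertwines the two regular representations, $\lambda_G(\tilde{f})$ kills $(\mathrm{ran}\,\iota)^\perp$ because its adjoint $\lambda_G(\tilde{f}^*)$ also has left-$H$-invariant range, and hence $\|\lambda_G(\tilde f)\|=|H|\,\|\lambda(f)\|$ exactly. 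Your identity is verifiably correct (for fixed $x\in G$ and $h_2\in H$ there is exactly one pair $(\gamma,h_1)$ with $h_1\gamma h_2=x$, so your $|H|$-to-one counting and the left-coset decomposition of the convolution both check out), and it is arguably more illuminating than the paper's computation, since it identifies $\csr\gh$ with a corner of $\csr(G)$ up to normalization; the price is that you must separately justify the monotonicity $\|\lambda_G(b)\|\leq\|\lambda_G(b')\|$ for $0\leq b\leq b'$, which the paper's route absorbs into the positivity of all its kernels.

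There is one concrete step that fails as written: the claim that $L(g):=\min_{h_1,h_2\in H}L_0(h_1gh_2)$ is a length function is not a ``routine triangle-inequality check'' and is false in general. Taking minimizers $(a,b)$ for $g$ and $(c,d)$ for $g'$, the best you get is $L(gg')\leq L_0(agg'd)\leq L_0(agb)+L_0(b^{-1}c^{-1})+L_0(cg'd)=L(g)+L(g')+L_0(b^{-1}c^{-1})$, and the middle error term need not vanish, so subadditivity can break. (The max-symmetrization fails for a different reason: it does not vanish at the identity.) The fix is standard and is exactly what the paper outsources to Lemma 2.1.3 of \cite{j2}: for instance, set $L(g)=0$ for $g\in H$ and $L(g)=\max_{h_1,h_2\in H}L_0(h_1gh_2)$ for $g\notin H$; one checks this is a bi-$H$-invariant length function equivalent to $L_0$ (within $2\max_{h\in H}L_0(h)$ of it), after which the rest of your forward direction, including $\|\tilde f\|_{s,L}^2=|H|\,\|f\|_{s,L}^2$, goes through unchanged. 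With that replacement your argument is complete.
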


\begin{proof} Let $n$ be the order of $H$. Suppose $G$ has (RD) \wrt a length function $L$. Using Lemma 2.1.3 of \cite{j2}, we can replace $L$ with an equivalent length function which is zero on $H$.  Let $P$ be the polynomial appeared in Part (iii) of Proposition \ref{pro:def}. Let $f\in \H_+\gh$ with $\text{supp} f\subseteq B_{r,L}\gh$ and let $k\in\r_+(H\ba G)$. We define $\tilde{k}\in \r_+(G)$ (resp. $\tilde{f} \in \r_+(G)$) by $\tilde{k}(x):=k(Hx)$  (resp. $\tilde{f}(x):=f(HxH)$) for all $x\in G$. Then, we have
\[
\|\tilde{k}\|_2^2=n \|k\|_2^2,
\]
where the norms are taken in $\ell^2(G)$ and $\ell^2(H\ba G)$, respectively. Also, we note that $\tilde{f}\in B_{r,L}(G)$ and we have
\[
\|\tilde{f}\|_2^2= n \| f\|_2^2,
\]
where the norms are taken in $\ell^2(G)$ and $\ell^2(H\ba G)$, respectively. Now, we have
\begin{eqnarray*}
\|f\ast k\|_2^2 &=& \sum_{y\in <H\ba G>}\left(\sum_{x\in<H\ba G>} f(x)k(x^{-1}y)\right)^2\\
&=& \sum_{y\in <H\ba G>}\left(\frac{1}{n}\sum_{x\in G} \tilde{f}(x)\tilde{k}(x^{-1}y)\right)^2\\
&=& \frac{1}{n^3}\sum_{y\in G}\left(\tilde{f}\ast\tilde{k}(y)\right)^2\\
&=& \frac{1}{n^3}\|\tilde{f}\ast\tilde{k}\|_2^2\\
&\leq& \frac{1}{n^3}P(r)^2\|\tilde{f}\|_2^2\|\tilde{k}\|_2^2\\
&=& \frac{1}{n} P(r)^2\| f\|_2^2 \|k\|_2^2.
\end{eqnarray*}
Thus $\gh$ has (RD) \wrt $L$.

Conversely, let $\gh$ has (RD) \wrt $L$ and let $P$ be the polynomial in Part (iii) of Proposition \ref{pro:def}. Let $H=\{ h_1, \cdots, h_n\}$. For $f\in \r_+(G)$ with $\text{supp}f \subseteq B_{r,L}(G)$, define $\bar{f}\in \H_+\gh$ by $\bar{f}(HgH):=\sum_{i,j=1}^n f(h_i g h_j)$. For $m\in\n$, let $c(m)$ be the least constant for which $ \left(\sum_{i=1}^m x_i\right)^2\leq c(m)
\sum_{i=1}^m x_i^2$ for all $x_i\geq 0$. One computes
\begin{eqnarray*}
\| \bar{f}\|_2^2 &=&\sum_{g\in <H\ba G>} \left( \sum_{i,j=1}^n f(h_i g h_j)  \right)^2\\
&\leq& c(n^2) \sum_{g\in <H\ba G>} \sum_{i,j=1}^n \left( f(h_i g h_j)  \right)^2\\
&\leq& n c(n^2)\| f\|_2^2.
\end{eqnarray*}
For $k\in \r_+ (G)$, define $\bar{k}\in\r_+(H\ba G)$ by $\bar{k} (Hg):= \sum_{i=1}^n k(h_i g)$. A similar computation as above shows that $\|\bar{k}\|_2^2 \leq  c(n)\| k\|_2^2$. We also note that $f\leq \tilde{\bar{f}}$ and $k\leq \tilde{\bar{k}}$.
Therefore, for every $g\in G$, we have $f\ast k (g)=\sum_{x\in G}f(x)k(x^{-1}
g) \leq \sum_{x\in <H\ba G>}\tilde{\bar{f}}(x)\tilde{\bar{k}}(x^{-1}
g)=\tilde{\bar{f}} \ast \tilde{\bar{k}}(g)$. Hence
\begin{eqnarray*}
\|f\ast k\|_2^2&\leq & \| \tilde{\bar{f}}\ast \tilde{\bar{k}}\|_2^2\\
&= & n^{2} \|\bar{f}\ast \bar{k}\|_2^2\\
&\leq & n^{2} P(r)^2 \| \bar{f}\|_2^2\| \bar{k}\|_2^2\\
&\leq & n^3 c(n)c(n^2) P(r)^2 \|f\|_2^2\| k\|_2^2.
\end{eqnarray*}
This shows that $G$ has (RD) \wrt $L$.
\end{proof}

\begin{remark}
\label{r:infsub}
When $H$ is an infinite almost normal subgroup of $G$, the Hecke pair $\gh$ and the group $G$ cannot have property (RD) \wrt a common length function $L$. The reason is that if $L$ is a length function on the Hecke pair $\gh$, then $L$ is zero on $H$ and consequently $H$ cannot have (RD) \wrt $L$, see Example 1.6 of \cite{chat}. Therefore, $G$ cannot have (RD) \wrt $L$, see  Proposition 2.1.1 of \cite{j2}.
\end{remark}

The above theorem generalizes Proposition 2.1.4 of \cite{j2}. A generalization of the above theorem will be given in \cite{s1}. Every group $G$ possessing property (RD) with some finite subgroup $H$ gives rise to an example for Hecke pairs with property (RD). Constructions in Section 2 of \cite{j2} provide us with a number of examples of this kind. Here, we content ourself with an example coming from semidirect products.

\begin{definition}
Let $E$ be a group generated by a finite set $S=S^{-1}$ and let $l$ be the word length function on $E$ \wrt $S$. For $\alpha\in \text{Aut}(E)$, the {\it amplitude of $\alpha$ \wrt $S$} is the number $a(\alpha):=\max_{s\in S}l(\alpha(s))$. Let $F$ be another finitely generated group with a word length function $L$. We say a map $\theta :F\ra \text{Aut}(E)$ has {\it polynomial amplitude} if there exist positive constants $c$ and $r$ such that
$a(\theta(f))\leq c(1+L(f))^r$ for all $f\in F$.
\end{definition}

\begin{proposition} [\cite{j2}, Corollary 2.1.10] Let $F$ and $E$ be two finitely generated groups and let $\alpha$ be an action of $F$
on $E$ of polynomial amplitude. If $E$ and $F$ have property (RD),
then so does the semidirect product $E\rtimes_\alpha F$.
\end{proposition}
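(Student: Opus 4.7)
The plan is to verify condition (iii) of Proposition \ref{pro:def} for $G:=E\rtimes_\alpha F$ equipped with the word length function $L_G$ relative to the symmetric generating set $S\cup T$ (viewing $S\subset E\times\{1\}$ and $T\subset\{1\}\times F$). Being a word length, $L_G$ is automatically a length function on $G$, and it restricts to $l$ on $E$ and to $L$ on $F$.

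First I would establish a slice-support estimate. If $(e,f)$ is a product of $N\leq R$ generators from $S\cup T$, then pushing every $T$-letter to the right past the $S$-letters via the action $\alpha$ rewrites $(e,f)=(w_1\,\alpha_{u_1}(w_2)\cdots\alpha_{u_{k-1}}(w_k),\,u_k)$, where $u_i$ is a prefix of the $T$-subword and $w_i$ is the $i$th block of $S$-letters. Since $L(u_i)\leq R$, polynomial amplitude yields $l(\alpha_{u_i}(s))\leq c(1+R)^r$ for every $s\in S$; summing over the at most $R$ occurring $S$-generators gives $l(e)\leq c(1+R)^{r+1}=:R'$, while clearly $L(f)\leq R$.

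Next I would decompose the convolution along the $F$-coordinate and chain the two (RD) hypotheses. For $\phi\in\r_+(G)$ supported in $B_{R,L_G}(G)$ and $k\in\r_+(G)$, write $\phi_j(e):=\phi(e,j)$ and $k_j(e):=k(e,j)$. A direct computation using $(e',j)^{-1}(e,f)=(\alpha_{j^{-1}}((e')^{-1}e),\,j^{-1}f)$ yields
\[
(\phi\ast k)(e,f)=\sum_{j\in F}\bigl(\phi_j\ast_E\tilde k_{j,f}\bigr)(e),\qquad \tilde k_{j,f}(u):=k_{j^{-1}f}\bigl(\alpha_{j^{-1}}(u)\bigr),
\]
with $\ast_E$ convolution in $E$; moreover $\|\tilde k_{j,f}\|_{\ell^2(E)}=\|k_{j^{-1}f}\|_{\ell^2(E)}$ because $\alpha_{j^{-1}}$ is a bijection. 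By the slice-support estimate, each $\phi_j$ is supported in $B_{R',l}(E)$, so property (RD) of $E$ in the form of Proposition \ref{pro:def}(iii) gives $\|\phi_j\ast_E\tilde k_{j,f}\|_{\ell^2(E)}\leq P_E(R')\|\phi_j\|_{\ell^2(E)}\|k_{j^{-1}f}\|_{\ell^2(E)}$. Setting $\Phi(j):=\|\phi_j\|_{\ell^2(E)}$ and $K(j):=\|k_j\|_{\ell^2(E)}$, the triangle inequality in $\ell^2(E)$ combined with Fubini yields $\|(\phi\ast k)(\cdot,f)\|_{\ell^2(E)}\leq P_E(R')(\Phi\ast_F K)(f)$, together with $\|\Phi\|_{\ell^2(F)}=\|\phi\|_{\ell^2(G)}$ and $\|K\|_{\ell^2(F)}=\|k\|_{\ell^2(G)}$. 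Since $\Phi$ is supported in $B_{R,L}(F)$, applying property (RD) of $F$ then gives
\[
\|\phi\ast k\|_{\ell^2(G)}\leq P_E(R')\,P_F(R)\,\|\phi\|_{\ell^2(G)}\|k\|_{\ell^2(G)},
\]
and $R\mapsto P_E(c(1+R)^{r+1})P_F(R)$ is a polynomial in $R$, establishing condition (iii) of Proposition \ref{pro:def} for $G$ with respect to $L_G$.

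The main obstacle is reconciling the asymmetry of $\alpha$ with the symmetry required of a length function: the naive candidate $l(e)+L(f)$ fails to be a length function because $l$ is not $\alpha_f$-invariant, so $G$ cannot be treated as a metric direct product. Polynomial amplitude is precisely the hypothesis that repairs this, both by bounding the $E$-component of every $L_G$-ball and by guaranteeing that the twisted slices $\tilde k_{j,f}$ appearing in the convolution formula have the same $\ell^2(E)$-norm as the untwisted slices of $k$, so that the two applications of (RD) chain together cleanly.
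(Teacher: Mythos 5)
Your argument is correct. Note, however, that the paper does not prove this proposition at all: it is quoted verbatim from Jolissaint (\cite{j2}, Corollary 2.1.10) and used as a black box to produce examples, so there is no in-paper proof to compare against. What you have written is essentially a self-contained reconstruction of Jolissaint's extension argument specialized to semidirect products: the two key points --- (a) polynomial amplitude forces the $E$-component of any element of the $L_G$-ball of radius $R$ to lie in the $l$-ball of radius $c(1+R)^{r+1}$, and (b) the convolution on $E\rtimes_\alpha F$ fibers over convolution on $F$ with $E$-convolutions in each fiber, the twist $\alpha_{j^{-1}}$ being an $\ell^2(E)$-isometry --- are exactly the mechanism behind his result, and chaining the two instances of condition (iii) of Proposition \ref{pro:def} is the standard way to conclude. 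Two cosmetic remarks. First, your parenthetical claim that $L_G$ ``restricts to $l$ on $E$'' is not needed and is not true in general (the word length of $(e,1)$ in $S\cup T$ can be strictly smaller than $l(e)$, since $T$-letters may be used to shorten the spelling); fortunately your proof never uses it, only the one-sided slice estimate $l(e)\leq c(1+R)^{r+1}$, which is what actually holds. Second, since the exponent $r$ in the definition of polynomial amplitude need not be an integer, $R\mapsto P_E(c(1+R)^{r+1})P_F(R)$ is not literally a polynomial, but it is dominated by one (replace $r$ by $\lceil r\rceil$ and take $P_E$ increasing with non-negative coefficients), which is all that conditions (ii) and (iii) of Proposition \ref{pro:def} require. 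Neither point affects the validity of the proof.
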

Clearly, any action of a finite group on a finitely generated group
is of polynomial amplitude. Thus we have the following corollary:

\begin{corollary}
Let $F$ be a finite group acting on a finitely generated group $G$.
If $G$ has property (RD), then the Hecke pair $(G\rtimes F, F)$ has
property (RD).
\end{corollary}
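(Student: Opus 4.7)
The plan is to combine the two immediately preceding results: Jolissaint's Proposition on semidirect products (stated just above the corollary) and Theorem \ref{thm:fsub} relating property (RD) of a group to that of a Hecke pair with finite subgroup. The corollary essentially follows by chaining these two facts, so the proof should be quite short.

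First I would verify the hypotheses of Jolissaint's Proposition for $E:=G$ and the given finite group $F$ with action $\alpha$. The group $G$ is finitely generated and has (RD) by hypothesis; $F$ is finite, hence finitely generated, and has (RD) trivially (for instance with respect to the zero length function, since it is of polynomial, in fact bounded, growth). It remains to check that $\alpha:F\to\mathrm{Aut}(G)$ has polynomial amplitude: fixing a finite generating set $S=S^{-1}$ of $G$ with associated word length $l$, the set $\{a(\alpha(f)):f\in F\}=\{\max_{s\in S}l(\alpha(f)(s)):f\in F\}$ is finite because $F$ and $S$ are both finite, so it is uniformly bounded by some constant $M$, and one may take the constant polynomial $M$ to witness polynomial amplitude. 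Jolissaint's Proposition then yields that $G\rtimes_\alpha F$ has property (RD) as a group, with respect to some length function $L$.

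Next I would observe that $F$, being finite, is an almost normal subgroup of $G\rtimes_\alpha F$ (any finite subgroup is almost normal, as noted in the introduction), so $(G\rtimes_\alpha F,F)$ is a genuine Hecke pair. Now Theorem \ref{thm:fsub} applies: it says precisely that, when the subgroup in question is finite, the Hecke pair has property (RD) if and only if the ambient group does. Applying its forward direction to $G\rtimes_\alpha F$ with its finite subgroup $F$ gives property (RD) for the Hecke pair $(G\rtimes_\alpha F,F)$, as desired.

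There is no real obstacle here; the only thing worth pausing on is the polynomial amplitude condition, but it is immediate because $F$ is finite, so the ``polynomial'' can be taken to be a constant. The whole argument is a two-line reduction: apply Jolissaint to get (RD) for the ambient semidirect product, then apply Theorem \ref{thm:fsub} to transfer (RD) from the group to the Hecke pair.
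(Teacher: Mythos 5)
Your proof is correct and follows exactly the route the paper intends: the text preceding the corollary notes that any action of a finite group on a finitely generated group has polynomial amplitude (your constant-bound argument makes this explicit), so Jolissaint's Proposition gives property (RD) for $G\rtimes_\alpha F$, and Theorem \ref{thm:fsub} then transfers it to the Hecke pair $(G\rtimes_\alpha F, F)$. No gaps.
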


\begin{example}
Consider the infinite dihedral group $D_\infty=\z\rtimes \z/2\z$.
The Hecke pair $(D_\infty, \z/2\z)$ has property (RD).
\end{example}

We note that the Hecke pair $(D_\infty, \z/2\z)$ is actually a {\it Gelfand pair}, namely, its Hecke algebra is commutative.

\begin{remark}
Theorem \ref{thm:fsub} can be applied to give non-examples too. One
only needs to find a group $G$ not having (RD) with some non-normal
finite subgroup. To find such a group, we use Corollaries 3.1.8 and
3.1.9 of \cite{j2}. For example, $SL_3(\z)$ does not have (RD). To
create a situation as above, set $T:=\left(\begin{array}{ccc} 0 & 1
& 0\\1&0&0\\0&0&-1
\end{array}\right)$ and $S:=\left(\begin{array}{ccc} 0 & 1 & 0\\-1&0&0\\0&0&1
\end{array}\right)$ and let $H$ denote the subgroup generated by $T$.
Since $STS^{-1}\notin H$, $H$ is not normal in $SL_3(\z)$. Thus, the
Hecke pair $(SL_3(\z),H)$ is a non-trivial Hecke pair which does not
have (RD). If $G$ does not have a non-normal finite subgroup or it is
difficult to find such a subgroup, one can try the semidirect
product of $G$ with a finite group $H$ which $H$ acts non-trivially
on $G$. If there is such a finite group $H$, then the Hecke pair
$(G\rtimes H, H)$ does not have (RD).
\end{remark}


\section{The smooth subalgebra of rapidly decreasing functions}
\label{sec:rdfunctions}

In this section, $\gh$ is a Hecke pair equipped with a length
function $L$. We extend basic definitions of \cite{j1, j2} to this
setting. For $s\in\r$, the {\it Sobolev space of order $s$ with
respect to $L$} is defined as
\[
H^s_L\gh:=\{f:G//H\ra \c ; \|f\|_{s,L}< \infty \}.
\]
\begin{remark}
\label{rem:rdf1}
\begin{itemize}
\item [(i)] The space $H^s_L\gh$ can be considered as a Hilbert
space equipped with the the inner product defined by
\[
\ \qquad  \langle f_1,f_2\rangle_{s,L}:= \sum_{g\in <H\ba G>} f_1(g)\overline{f_2(g)} (1+L(g))^{2s}, \qquad \forall f_1, f_2\in H^s_L\gh.
\]
The completeness of $H^s_L\gh$ follows from a similar argument as
the proof of completeness of $L^2$ spaces because the above inner
product has been defined by an integral (with respect to the
counting measure).
\item[(ii)] One easily observes that $\|-\|_{s,L}\leq \| -\|_{t,L}$ for
$s\leq t$. Therefore, the spaces $H^s_L\gh$ are decreasing with
respect to the parameter $s$.
\item[(iii)] Our discussion before Theorem \ref{thm:fsub} shows that
if $L_1$ and $L_2$ are two length functions on a Hecke pair $\gh$
and $L_1$ dominates $L_2$, then $H^s_{L_1}\gh \subseteq  H^s_{L_2}\gh$ for
all $s\geq 0$.
\end{itemize}
\end{remark}
\begin{definition}
The {\it space of rapidly decreasing functions associated with the
Hecke pair $\gh$} with respect to $L$ is
\[
H^\infty_L\gh:=\bigcap_{s\geq 0} H^s_L\gh.
\]
\end{definition}
The norms $\{\|-\|_{s,L}\}_{s\geq0}$ induce a locally convex
topology on $H^\infty_L\gh$ and by Remark \ref{rem:rdf1}, it is the
same topology as when the parameter $s$ runs through natural numbers.
Therefore, $H^\infty_L\gh$ is a countably normed space and is
complete by Proposition 2.4 of \cite{bec1}. Thereby, $H^\infty_L\gh$
becomes a Fr\'echet space.
\begin{remark}
If a Hecke pair $\gh$ has property (RD) with respect to a length
function $L$, then $H\ill\gh\subseteq \csr\gh$. This follows from the
fact that $\H\gh$ generates $H^s_L\gh$ as a Hilbert space for all
$s\geq 0$ and for $s$ sufficiently large we have $H^s_L\gh\subseteq
\csr\gh$ because of the Haagerup inequality \ref{eq:Hineq2}.
\end{remark}

The aim of this section is to modify the contents of \cite{j1}
according to the setting of Hecke pairs to show that if a Hecke pair
$\gh$ possesses property (RD) with respect to a length function $L$,
then $H^\infty_L\gh$ is a smooth subalgebra of the Hecke \cs-algebra
$C^*_r\gh$.

For $r\geq 0$, let $P_r$ be the orthogonal projection on the closed
span of the set $\{\delta_g; g\in B_{r,L}(H\ba G) \}$ in
$\ell^2(H\ba G)$, where $\delta_g$ is the characteristic function of
the right coset $Hg$ in $H\ba G$. For $0<\alpha<1$ and $q, N\in
\mathbb{N}$, we define a map $\rho_{\alpha, q, N}: \csr\gh\ra
[0,\infty[$ by
\[
\rho_{\alpha, q, N}(a):=N^q\left(\|(1-P_{N})a P_{N-N^\alpha}\|+\|
P_{N-N^\alpha} a (1-P_{N})\|\right),
\]
for all $a\in \csr\gh$.
\begin{definition} Given a Hecke pair $\gh$ with a length function $L$,
the vector space  $T\ill\gh$ associated with $L$
is defined as
\[
T\ill\gh:=\left\{a\in \csr\gh; \forall \alpha \in ]0,1[, \forall q \in \n, \sup_{N\geq 1} \rho_{\alpha, q, N}(a)
< \infty \right\}
\]
and is endowed with the locally convex topology induced by the norm
of $\csr\gh$ and seminorms
\[
\nu_{\alpha,q}(a):=\sup_{N\geq 1} \rho_{\alpha, q, N}(a),
\]
for  $0<\alpha<1$ and $q\in \mathbb{N}$.
\end{definition}

We note that the defining condition for an element $a\in\csr\gh$ to
be in $T\ill\gh$ is equivalent to the condition that $\|(1-P_{N})a
P_{N-N^\alpha}\|+\| P_{N-N^\alpha} a (1-P_{N})\|=O(N^{-q})$ when $N$
tends to infinity for all $\alpha\in (0,1)$ and all positive
integers $q$. We also note that if $f\in\ell^2(H\ba G)$, then
$\sum_{h\in <H\ba G>} (1-P_N)f(h)= \sum_{h\in <H\ba G>, L(h)>N} f(h)$ and
like wise for $P_{N-N^\alpha}$. In the following remark, we show
that $T\ill\gh$ is actually an algebra and discuss various features
of $T\ill\gh$.
\begin{remark}
\label{rem:tinf1}
\begin{itemize}
\item[(i)] If $\alpha > \beta$, then $P_{N-N^\beta} = P_{N-N^\alpha} + Q$,
where $Q$ is the orthogonal projection on the closed span of the set
$\{\delta_g; g\in <H\ba G>, N-N^\alpha < L(g) \leq N-N^\beta\}$ of
$\ell^2(H\ba G)$, and so $\nu_{\alpha,q}\leq \nu_{\beta,q}$. Therefore,
in order to generate the topology of $T\ill\gh$, one may choose the
parameter $\alpha$ from an arbitrary sequence in $]0,1[$ approaching
to 0. In this way, we get a countable family of seminorms defining
the topology of $T\ill\gh$. Now let $\{a_n\}$ be a Cauchy sequence in
$T\ill\gh$. Since it is Cauchy with respect to the norm of $\csr\gh$,
it has a limit, say $a$, in $\csr\gh$. On the other hand, since
$\{a_n\}$ is Cauchy with respect to every seminorm $\nu_{\alpha,q}$,
for given $\epsilon>0$ and $m, n$ large enough, we have
$\nu_{\alpha,q}(a_n-a_m)\leq \epsilon$. Now due to the facts that
$\|a_n-a\|\rightarrow 0$ and the seminorms are continuous \wrt the norm of $\csr\gh$, by letting $m$ tend to infinity, we get
$\nu_{\alpha,q}(a_n-a)\leq \epsilon$. This shows that
$a_n\rightarrow a$ with respect to every seminorm $\nu_{\alpha,q}$
and consequently $a\in T\ill\gh$. Hence, $T\ill\gh$ is a Fr\'echet
space.
\item[(ii)] Indeed, $T\ill\gh$ is a Fr\'echet algebra in the
terminology of \cite{sch}, namely in addition to the above remark,
the multiplication in $T\ill\gh$ is jointly continuous. For all $a,
b\in T\ill\gh$, and all $\alpha$ and $q$, we have
\begin{eqnarray*}
(1-P_N)abP_{N-N^\alpha}&=&(1-P_N)a P_{N-N^{\alpha/2}} b P_{N-N^\alpha}\\
&+&(1-P_N)a (1-P_{N-N^{\alpha/2}})b P_{N-N^\alpha}.
\end{eqnarray*}
Hence,
\begin{eqnarray*}
\|(1-P_N)abP_{N-N^\alpha}\|&\leq& \|(1-P_N)a P_{N-N^{\alpha/2}}\| \|b P_{N-N^\alpha}\|\\
&+& \|(1-P_N)a \|\|(1-P_{N-N^{\alpha/2}})b P_{N-N^\alpha}\|\\
&\leq& \|(1-P_N)a P_{N-N^{\alpha/2}}\| \|b\|\\
&+& \|a\|\|(1-P_{N-N^{\alpha/2}})b P_{N-N^{\alpha/2}-N^ {\alpha/2}}\|\\
&\leq& \|(1-P_N)a P_{N-N^{\alpha/2}}\| \|b\|\\
&+& \|a\|\|(1-P_{M})b P_{M-M^ {\alpha/2}}\|,
\end{eqnarray*}
where $M=N-N^{\alpha/2}$ and $M\rightarrow \infty$ whenever
$N\rightarrow \infty$. In the last step, we used the fact that
$(N-N^{\alpha/2})-N^ {\alpha/2}\leq
(N-N^{\alpha/2})-(N-N^{\alpha/2})^ {\alpha/2}$ which can be easily
checked. We have a similar estimation for $\|P_{N-N^\alpha} ab
(1-P_N)\|$. One also notes that $O(N^q)=O(M^q)$. These facts imply
that
\begin{equation}\label{f:submulti}
\nu_{\alpha,q}(ab) \leq
\nu_{\alpha/2,q}(a)\|b\|+\nu_{\alpha/2,q}(b)\|a\|.
\end{equation}
This shows that $T\ill\gh$ is closed under multiplication and the
multiplication is jointly continuous in the Fr\'echet topology of
$T\ill\gh$.
\item[(iii)] It is clear that $T\ill\gh$ is closed under involution,
addition and scalar multiplication, thus it is an involutive
subalgebra of $\csr\gh$. Now, let $g$ be an element of $G//H$ and
$\delta_g$ be the characteristic function of the double coset
containing $g$ as an element of $\H\gh$. Given $\alpha\in (0,1)$, It
is clear that $(1-P_N)\delta_g P_{N-N^\alpha}=0$ for all $N\geq
L(g)^{1/\alpha}$. Thus, $\delta_g$ belong to $T\ill\gh$. This proves
that $\H\gh \subseteq T\ill\gh$.
\item[(iv)] For every length function $L$ on $\gh$, we have $T\ill\gh\subseteq H\ill\gh$.
To see this, recall that every element $f$ of $\csr\gh$ can be
regarded as an element of $\ell^2(H\ba G)$ by its action on $\delta_1\in \ell^2(H\ba G)$,
i.e. $f(\delta_1)$. Thus, for given $\f\in T\ill\gh$, we can write
$\|(1-P_N)\f\|_2=\|(1-P_N)\f(\delta_1)\|_2=\|(1-P_N)\f(P_{N-N^{1/2}})(\delta_1)\|_2\leq
\|(1-P_N)\f(P_{N-N^{1/2}})\|$. Since, for every positive integer
$q$, the last term is equal to $O(N^{-q})$ when
$N\rightarrow\infty$, the following series is convergent for every
positive integer $m$:
\begin{eqnarray*}
\sum_{k\geq 1} k^{2m} \| (1-P_k)\f \|_2^2
&=\,& \sum_{k\geq 1} k^{2m} \sum_{h\in<H\ba G>, L(h)>k}|\f (h)|^2 \\
&=:&  \star
\end{eqnarray*}
Letting $\chi_l$ be the characteristic function of the set
$C_{l-1,L}(H\ba G)$, we obtain
\begin{eqnarray*}
\star &=&  \sum_{k\geq 1} k^{2m} \sum_{l>k}\|\f\chi_l \|_2^2 \\
&=&  \sum_{k\geq 2} \left(\sum_{l=1}^{k-1} l^{2m}\right)\|\f\chi_k
\|_2^2.
\end{eqnarray*}
Now, for given $s>0$, there exists an integer $m$ large enough such
that $\sum_{l=1}^{k-1} l^{2m}\geq (1+k)^{2s}$ and this shows
$\|\f\|_{s,L}\leq \infty$. Since this is true for all $s>0$, we conclude
that $\f\in H\ill\gh$.
\end{itemize}
\end{remark}

\begin{definition} For a Hecke pair $\gh$ equipped with a length
 function $L$, the Fr\'{e}chet algebra $T\ill\gh$ is called the
{\it Jolissaint algebra of the Hecke pair $\gh$ with respect to
$L$}.
\end{definition}

\begin{proposition}
\label{pro:HT}
If a Hecke pair $\gh$ has property (RD) with respect to a length
function $L$, then $T\ill\gh= H\ill\gh$.
\end{proposition}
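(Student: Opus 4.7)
The plan is to establish the nontrivial direction $H\ill\gh \subseteq T\ill\gh$, since the reverse containment is Remark 3.5(iv) and does not require property (RD). Fix $\f \in H\ill\gh$. By property (RD) and a density argument, the regular representation extends so that $\lambda(\f) \in \csr\gh$ is a bounded operator, and consequently, for every $\alpha \in (0,1)$ and $q \in \n$, it is legitimate to estimate $\rho_{\alpha,q,N}(\f)$. I will focus on the first term, $\|(1-P_N)\lambda(\f) P_{N-N^\alpha}\|$, and show it is $O(N^{-q})$ as $N \to \infty$; the companion estimate for $\|P_{N-N^\alpha}\lambda(\f)(1-P_N)\|$ follows by passing to adjoints, using $L(g) = L(g^{-1})$ to see that $\f^*$ has the same Sobolev norms as $\f$.

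The geometric heart of the argument is a triangle-inequality confinement: if $\xi \in \ell^2(H\ba G)$ is supported on right cosets $Hh$ with $L(h) \leq N - N^\alpha$ and $L(y) > N$, then in $(\lambda(\f)\xi)(Hy) = \sum_\gamma \f(\gamma)\xi(\gamma^{-1}y)$ only those $\gamma$ with $L(\gamma) > N^\alpha$ can contribute. Consequently,
\[
(1-P_N)\lambda(\f)P_{N-N^\alpha} \;=\; (1-P_N)\lambda(\f_{N,\alpha})P_{N-N^\alpha},
\]
where $\f_{N,\alpha}$ is the restriction of $\f$ to double cosets of length strictly greater than $N^\alpha$, so it suffices to bound $\|\lambda(\f_{N,\alpha})\|$.

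I then decompose $\f_{N,\alpha} = \sum_{k > N^\alpha} \f \chi_k$ over integers $k$, where $\chi_k$ is the characteristic function of $C_{k-1,L}\gh$. Since $\f \chi_k$ is supported in $B_{k,L}\gh$, Proposition 2.4(ii) applied to $|\f \chi_k|$ (together with $\|\lambda(f)\| \leq \|\lambda(|f|)\|$) yields $\|\lambda(\f\chi_k)\| \leq P(k)\|\f\chi_k\|_2$ for the polynomial $P$ afforded by property (RD); writing $P(k) \leq C'(1+k)^s$ and applying Cauchy--Schwarz with weights $k^{-m}$ and $k^{m}$,
\[
\|(1-P_N)\lambda(\f)P_{N-N^\alpha}\| \;\leq\; \sum_{k > N^\alpha} P(k)\|\f\chi_k\|_2 \;\leq\; \left(\sum_{k > N^\alpha} P(k)^2 k^{-2m}\right)^{1/2}\!\left(\sum_{k \geq 1} k^{2m} \|\f\chi_k\|_2^2\right)^{1/2}.
\]
The second factor is dominated by $\|\f\|_{m,L}$ (using $k \leq 1 + L(g)$ on the support of $\chi_k$), hence finite because $\f \in H\ill\gh$. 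The first factor, by integral comparison, is $O\!\bigl(N^{\alpha(s - m + 1/2)}\bigr)$, so choosing any integer $m > s + \tfrac{1}{2} + q/\alpha$ gives the required $O(N^{-q})$.

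The main obstacle is technical: Proposition 2.4(ii) is stated for finitely supported $f \in \H\gh$, whereas the pieces $\f \chi_k$ generally have infinite support. This is resolved by approximating $\f\chi_k$ in $\ell^2$ by its finite-support truncations, all of which remain supported in $B_{k,L}\gh$; Proposition 2.4(ii) applied to these truncations shows their $\lambda$-images form a Cauchy sequence in operator norm, and the limit operator inherits the bound $P(k)\|\f\chi_k\|_2$. With this extension in hand, the estimates above go through verbatim.
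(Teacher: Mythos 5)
Your argument is correct, and it shares the paper's key geometric step---the triangle-inequality confinement showing that $(1-P_N)\lambda(\f)P_{N-N^\alpha}$ only sees the part of $\f$ supported on double cosets of length $>N^\alpha$---but the mechanism you use to extract the $O(N^{-q})$ decay is genuinely different. The paper does not decompose into annuli: after the confinement it multiplies and divides the inner sum by $(1+L(h))^{q/\alpha}\geq(1+N^\alpha)^{q/\alpha}$, pulls the factor $(1+N^\alpha)^{-2q/\alpha}$ outside, and applies the Haagerup inequality directly to the reweighted function $f=\f\cdot(1+L)^{q/\alpha}$, whose weighted norm is exactly $\|\f\|_{s+q/\alpha,L}$; this yields $\|(1-P_N)\f P_{N-N^\alpha}\|\leq C(1+N^\alpha)^{-q/\alpha}\|\f\|_{s+q/\alpha,L}$ in one stroke, with an explicit Sobolev norm on the right. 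Your route instead sums the annular pieces $\f\chi_k$, invokes the ball-supported form of (RD) (Proposition \ref{pro:def}(ii)) on each piece, and closes with a weighted Cauchy--Schwarz; this costs more bookkeeping but uses only condition (ii) as a black box and, to its credit, explicitly justifies extending that condition from finitely supported functions to $\ell^2$-limits, a point the paper passes over in silence. One caveat: your treatment of the second term $\|P_{N-N^\alpha}\lambda(\f)(1-P_N)\|$ via adjoints rests on the claim that $\f^*$ has the same Sobolev norms as $\f$, which is not automatic---the norm $\|\cdot\|_{s,L}$ sums over right cosets, and the double cosets $HgH$ and $Hg^{-1}H$ need not contain the same number of them, so $\|\f^*\|_{s,L}$ and $\|\f\|_{s,L}$ can differ by an unbounded modular factor. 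It is safer to run the confinement argument directly on that term: if $\xi$ is supported where $L>N$ and $L(y)\leq N-N^\alpha$, then only $\gamma$ with $L(\gamma)=L(\gamma^{-1})>N^\alpha$ can contribute to $(\lambda(\f)\xi)(Hy)$, so this term is also dominated by $\|\lambda(\f_{N,\alpha})\|$ and your estimate applies verbatim.
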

\begin{proof}
In the above remark, we showed that $T\ill\gh\subseteq H\ill\gh \cap \csr\gh$. Now, by proving the converse of this inequality, we obtain the equality $T\ill\gh= H\ill\gh \cap \csr\gh$. From this, it is obvious that $T\ill\gh= H\ill\gh$ if and only if $H\ill\gh \subseteq \csr\gh$.

Let $\f$ be an element of $H\ill\gh$. For all $\alpha\in (0,1)$, natural number $N$ and $\xi\in\ell^2(H\ba G)$, we
have
\begin{eqnarray*}
\|(1-P_N)\f P_{N-N^\alpha}(\xi)\|_2^2&=\,& \sum_{g\in <H\ba G>} \left|(1-P_N)\f P_{N-N^{\alpha}}(\xi)(g)\right|^2 \\
&=\,&  \sum_{g\in <H\ba G>, L(g)>N} |\f P_{N-N^{\alpha}}(\xi)(g)|^2 \\
&=\,&  \sum_{L(g)>N} \left| \sum_{L(h)>N^\alpha }\f(h) P_{N-N^{\alpha}}(\xi)(h^{-1}g)\right|^2\\
&=:& \bigstar.
\end{eqnarray*}
The last line is because of the fact that if $L(h)\leq N^\alpha$, then
$N-N^\alpha < L(g)-L(h)\leq L(h^{-1}g)$, and so
$P_{N-N^{\alpha}}(\xi)(h^{-1}g)=0$. Now, because of the condition
$L(h)>N^\alpha $, for every positive integer $q$, we have
\[
\bigstar\leq \frac{1}{(1+N^\alpha)^{2q/\alpha}} \sum_{L(g)>N} \left| \sum_{L(h)>N^\alpha }\f(h) (1+L(h))^{q/\alpha} P_{N-N^{\alpha}}(\xi)(h^{-1}g)\right|^2.
\]
By defining $f(h):=\f(h) (1+L(h))^{q/\alpha}$ and going
backward, we obtain
\begin{eqnarray*}
\bigstar&\leq& \frac{1}{(1+N^\alpha)^{2q/\alpha}} \|(1-P_N)f
P_{N-N^\alpha}(\xi)\|_2^2\\
&\leq& \frac{1}{(1+N^\alpha)^{2q/\alpha}}
\|\lambda(f)\|^2\|\xi\|_2^2.
\end{eqnarray*}
Since $\f\in H\ill\gh$, we have $f\in H\ill\gh$ and, for some positive
real numbers $C, s$, we can write
\begin{eqnarray*}
\bigstar&\leq& \frac{C^2}{(1+N^\alpha)^{2q/\alpha}}
\|f\|_{s,L}^2\|\xi\|_2^2\\
&\leq& \frac{C^2}{(1+N^\alpha)^{2q/\alpha}} \|\f\|_{s+q/\alpha
,L}^2\|\xi\|_2^2.
\end{eqnarray*}
This shows that $\|(1-P_N)\f P_{N-N^\alpha}\|=O(N^{-q})$ when $N$
tends to infinity. A similar argument applies to
$\|P_{N-N^\alpha}\f(1-P_N)\|$. Therefore, $\f\in T\ill\gh$.
\end{proof}

\begin{lemma} Let $\mathcal{A}$ be a unital dense subalgebra of a Banach
algebra $A$. Let there exist a $0<\delta<1$ such that the series
$\sum_{n\geq 1} a^n$ belongs to $\mathcal{A}$ for all
$a\in\mathcal{A}$ that $\|a\|<\delta$. Then $\mathcal{A}$ has the
property of spectral permanence in $A$.
\end{lemma}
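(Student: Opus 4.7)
The goal is spectral permanence: for every $a\in\mathcal{A}$, $\mathrm{Sp}_\mathcal{A}(a)=\mathrm{Sp}_A(a)$. The inclusion $\mathrm{Sp}_A(a)\subseteq\mathrm{Sp}_\mathcal{A}(a)$ is automatic, since any inverse computed inside $\mathcal{A}$ is in particular an inverse inside $A$. So the entire content is the reverse inclusion, and because $\mathcal{A}$ is unital (and contains $\lambda\cdot 1$ for all $\lambda\in\c$), this reduces to the following assertion: if $a\in\mathcal{A}$ is invertible in $A$, then $a^{-1}\in\mathcal{A}$.

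My plan is to prove this assertion by a standard density-plus-Neumann-series argument, with the twist that the Neumann series is available to us \emph{inside} $\mathcal{A}$ thanks to the hypothesis. Fix $a\in\mathcal{A}$ with $a^{-1}\in A$. Since $\mathcal{A}$ is dense in $A$, I can pick $c\in\mathcal{A}$ with
\[
\|c-a^{-1}\|<\frac{\delta}{\|a\|+1},
\]
so that $x:=1-ca\in\mathcal{A}$ satisfies $\|x\|=\|(a^{-1}-c)a\|\leq \|c-a^{-1}\|\|a\|<\delta$. By the hypothesis of the lemma, $\sum_{n\geq 1}x^n\in\mathcal{A}$, hence $y:=1+\sum_{n\geq 1}x^n\in\mathcal{A}$ as well (here I use that $\mathcal{A}$ is unital). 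In the Banach algebra $A$, however, the same series is just the Neumann series for $(1-x)^{-1}=(ca)^{-1}$, so $y=(ca)^{-1}$ is an honest two-sided inverse of $ca$ in $A$.

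Finally I form $a^{-1}=yc$ in $A$: indeed $(yc)a=y(ca)=1$ and similarly $a(yc)=1$ since $a$ is invertible in $A$ and a left inverse of an invertible element is automatically a two-sided inverse. Because $y,c\in\mathcal{A}$ and $\mathcal{A}$ is a subalgebra, $yc\in\mathcal{A}$, so $a^{-1}\in\mathcal{A}$, which is what was needed.

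There isn't really a hard step; the mild technical points to verify carefully are (a) that the approximation bound can be made uniform enough to push $\|1-ca\|$ strictly below $\delta$ — this is where I use that $a^{-1}\in A$ is a fixed element so $\|a\|$ is a fixed constant; (b) that the element $y$ constructed inside $\mathcal{A}$ really coincides in $A$ with the Neumann inverse of $1-x$, which is automatic because both series converge in the norm of $A$ to the same limit; and (c) that applying the argument to $a-\lambda$ in place of $a$ uses only $a-\lambda\in\mathcal{A}$, for which unitality of $\mathcal{A}$ is essential.
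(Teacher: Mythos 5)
Your proof is correct and follows essentially the same route as the paper's: approximate $a^{-1}$ by an element of $\mathcal{A}$, observe that the resulting product differs from $1$ by an element of norm less than $\delta$, invert it inside $\mathcal{A}$ via the hypothesized Neumann series, and conclude. If anything, you spell out two points the paper leaves implicit (the reduction of spectral permanence to invertibility of elements of $\mathcal{A}$, and the verification that the one-sided inverse is two-sided because $a$ is already invertible in $A$), so no changes are needed.
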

\begin{proof}
Let $x\in\mathcal{A}$ be invertible in $A$. Then there exists a
$y\in\mathcal{A}$ such that $\|x^{-1}-y\|<\delta/\|x\|$. This
implies that $\|1- xy\|<\delta$. By assumption, $\sum_{n\geq 0}
(1-xy)^n$ belongs to $\mathcal{A}$ and is a right inverse of $xy$.
This shows $x$ has a right inverse (and similarly a left inverse) in
$\mathcal{A}$. Thus, $x$ is invertible in $\mathcal{A}$.
\end{proof}
\begin{proposition}
\label{prop:spec} Let $a\in T\ill\gh$ and let $\|a\|\leq 1/2$. Then
$\sum_{n\geq 1} a^n\in T\ill\gh$ and consequently $T\ill\gh$ has the
property of spectral permanence in $\csr\gh$. Moreover, for all
positive integers $q$ and $\alpha\in (0,1)$, we have
\begin{equation}
\label{eq:nuineq2} \nu_{\alpha,q}\left( \sum_{n\geq 1} a^n \right)
\leq c(\alpha,q) \left(\nu_{\alpha/2, q+1} (a) + \|a\|\right).
\end{equation}
where $c(\alpha,q)>0$ is independent of $a$.
\end{proposition}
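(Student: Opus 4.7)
The plan is to establish the quantitative inequality (\ref{eq:nuineq2}) directly and then deduce spectral permanence as an immediate application of the preceding lemma. Since $\|a\|\leq 1/2$, the series $\sum_{n\geq 1}a^n$ converges in the norm of $\csr\gh$ to $a(1-a)^{-1}$; the real work lies in bounding $\|(1-P_N)\bigl(\sum_{n\geq 1}a^n\bigr)P_{N-N^\alpha}\|$ (and the symmetric quantity) by $c(\alpha,q)\bigl(\nu_{\alpha/2,q+1}(a)+\|a\|\bigr)/N^q$, uniformly in $N$.

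For a fixed positive integer $n$, I would introduce a uniform chain of intermediate projections by
\[
M_j \,:=\, N - N^\alpha + \tfrac{j}{n}N^\alpha, \qquad j=0,1,\ldots,n,
\]
so that $M_0=N-N^\alpha$, $M_n=N$ and $M_j-M_{j-1}=N^\alpha/n$. An easy induction on $n$ yields the telescoping identity
\[
a^n P_{M_0} \,=\, P_{M_n}aP_{M_{n-1}}\cdots aP_{M_0} \,+\, \sum_{j=1}^{n} a^{\,n-j}(1-P_{M_j})\,aP_{M_{j-1}}\cdots aP_{M_0},
\]
after which left multiplication by $1-P_N=1-P_{M_n}$ annihilates the first summand and each remaining term is bounded by $\|a\|^{n-1}\|(1-P_{M_j})aP_{M_{j-1}}\|$.

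The crucial step is to control this inner factor by $\nu_{\alpha/2,q+1}(a)$. Provided $n\leq N^{\alpha/2}$, the inequality $N^\alpha/n\geq N^{\alpha/2}\geq M_j^{\alpha/2}$ forces $M_{j-1}\leq M_j-M_j^{\alpha/2}$, so that $P_{M_{j-1}}\leq P_{M_j-M_j^{\alpha/2}}$ and
\[
\|(1-P_{M_j})aP_{M_{j-1}}\| \,\leq\, \frac{\nu_{\alpha/2,q+1}(a)}{M_j^{\,q+1}} \,\leq\, \frac{2^{q+1}\nu_{\alpha/2,q+1}(a)}{N^{q+1}},
\]
using $M_j\geq M_0\geq N/2$ for $N$ large. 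Summing $n$ such terms and then over $1\leq n\leq N^{\alpha/2}$, the bound $\sum_{n\geq 1}n\|a\|^{n-1}\leq (1-\|a\|)^{-2}\leq 4$ produces a total of order $\nu_{\alpha/2,q+1}(a)/N^{q+1}$. For $n>N^{\alpha/2}$ the chain argument fails, but the crude estimate $\|(1-P_N)a^nP_{N-N^\alpha}\|\leq\|a\|^n$, together with the fact that $\sup_{N\geq 1}N^q\|a\|^{N^{\alpha/2}}$ is finite and depends only on $\alpha,q$ when $\|a\|\leq 1/2$, makes this tail contribute at most $c'(\alpha,q)\|a\|/N^q$. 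Combining the two regimes and repeating the symmetric estimate for $\|P_{N-N^\alpha}\bigl(\sum a^n\bigr)(1-P_N)\|$ yields (\ref{eq:nuineq2}); in particular $\sum_{n\geq 1}a^n\in T\ill\gh$, and since $T\ill\gh\supseteq\H\gh$ is dense in $\csr\gh$, the preceding lemma applied with $\delta=1/2$ delivers spectral permanence.

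The main obstacle is calibrating the chain so that the individual step size $N^\alpha/n$ dominates $M_j^{\alpha/2}$: this is precisely the constraint $n\leq N^{\alpha/2}$ that forces the split into a telescopic regime and a geometric tail regime, and the verification that the two estimates glue together (so that the $\|a\|$ picked up by the tail is separated from the $\nu_{\alpha/2,q+1}(a)$ contributed by the bulk) is the delicate bookkeeping point. The dyadic sacrifice $\alpha\rightsquigarrow\alpha/2$ and the loss $q\rightsquigarrow q+1$ on the right-hand side of (\ref{eq:nuineq2}) are exactly what the chain argument costs to close up.
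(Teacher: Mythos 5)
Your argument is correct and follows essentially the same route as the paper's own proof: a telescoping chain of intermediate projections controlled by $\nu_{\alpha/2,q+1}(a)$ for $n\leq N^{\alpha/2}$, a crude geometric bound $\|a\|^{n}$ for the tail $n>N^{\alpha/2}$ absorbed into $c(\alpha,q)\|a\|$, and the preceding lemma for spectral permanence. The only (cosmetic) difference is that you partition $[N-N^{\alpha},N]$ into $n$ equal steps of width $N^{\alpha}/n$ while the paper descends from $N$ in fixed steps $N_k=N-kN^{\alpha/2}$, and you additionally exploit the factor $\|a\|^{n-1}$ in the bulk sum, which slightly sharpens but does not alter the estimate.
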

\begin{proof}
Let fix positive integers $q$, $N$ and real number $\alpha\in
(0,1)$. For $1\leq n\leq N^{\alpha/2}$, one can write
\begin{eqnarray*}
\|(1-P_N)a^n P_{N-N^{\alpha}}\| &=& \|(1-P_N)a P_{N-N^{\alpha/2}}
a^{n-1}P_{N-N^{\alpha}}\|\\
&+& \|(1-P_N)a (1-P_{N-N^{\alpha/2}})
a^{n-1}P_{N-N^{\alpha}}\|\\
&\leq& \|(1-P_N)a P_{N-N^{\alpha/2}}\|\\
&+& \|(1-P_{N-N^{\alpha/2}}) a^{n-1}P_{N-N^{\alpha}}\|.
\end{eqnarray*}
By repeating the same procedure for the last term until $a$ has no
power greater than 1, we get
\[
\|(1-P_N)a^n P_{N-N^{\alpha}}\| \leq \sum_{k=0}^{n-1} \|(1-P_{N_k})a
P_{N_{k+1}}\|,
\]
where $N_k=N-kN^{\alpha/2}$ for $0\leq k\leq n-1$. For $n>
N^{\alpha/2}$, clearly we have $\|(1-P_N)a^n P_{N-N^{\alpha}}\| \leq
2\|a\|/2^{N^{\alpha/2}}$. Combining these facts and noticing that
$N_{k+1}\leq N-N_k^{\alpha/2}$, we have
\[
\|(1-P_N)(\sum_{n\geq 1}a^n ) P_{N-N^\alpha}\| \leq \sum_{n\leq
N^{\alpha/2}} \sum_{k=0}^{n-1} \|(1-P_{N_k})a
P_{N-N_{k}^{\alpha/2}}\| +\frac{2\|a\|}{2^{N^{\alpha/2}}}.
\]
A similar estimation holds for the other term and the number of
terms in the above double sum is less than or equal $N$. We also
note that $O(N^{-q})=O(N_k^{-q})$ when $N$ tends to infinity for
every $k$. Therefore, up to a constant, which we call it
$c(\alpha,q)$, we obtain the desired inequality.
\end{proof}
\begin{theorem} $T\ill\gh$ is a smooth subalgebra of $\csr\gh$.
\end{theorem}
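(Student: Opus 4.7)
The plan is to combine the spectral permanence established in Proposition \ref{prop:spec} with the Cauchy integral representation of holomorphic functional calculus. Fix $a\in T\ill\gh$; since $\H\gh\subseteq T\ill\gh$ by Remark \ref{rem:tinf1}(iii), $T\ill\gh$ is a unital dense subalgebra of $\csr\gh$. Given a holomorphic function $f$ on an open neighborhood $U$ of $\mathrm{sp}_{\csr\gh}(a)$, I would choose a finite system of piecewise smooth contours $\gamma\subset U\setminus\mathrm{sp}(a)$ winding once around each connected component of the spectrum, and consider
$$f(a)\,=\,\frac{1}{2\pi i}\int_\gamma f(z)(z-a)^{-1}\,dz,$$
a Bochner-type integral converging to $f(a)$ in $\csr\gh$. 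By the spectral permanence asserted in Proposition \ref{prop:spec}, $(z-a)^{-1}\in T\ill\gh$ for every $z\in\gamma$, so the remaining task is to show the integral converges in the Fr\'echet topology of $T\ill\gh$.

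For this I would argue locally around each $z_0\notin\mathrm{sp}(a)$. Setting $b:=(z_0-a)^{-1}\in T\ill\gh$, for $z$ near $z_0$ one has
$$(z-a)^{-1}\,=\,b\,+\,b\sum_{n\geq 1}\bigl((z_0-z)b\bigr)^n,$$
valid as soon as $|z-z_0|\,\|b\|\leq 1/2$ in $\csr\gh$. Applying Proposition \ref{prop:spec} to $c:=(z_0-z)b$ (whose norm is at most $1/2$) bounds $\nu_{\alpha,q}\!\bigl(\sum_{n\geq 1}c^n\bigr)$ in terms of $\nu_{\alpha/2,q+1}(b)$, $\|b\|$ and $|z-z_0|$ via inequality \ref{eq:nuineq2}, while the submultiplicative estimate \ref{f:submulti} then controls $\nu_{\alpha,q}$ of the product with $b$. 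Combining these with the resolvent identity $(z-a)^{-1}-(w-a)^{-1}=(w-z)(z-a)^{-1}(w-a)^{-1}$ yields both local boundedness and local Lipschitz continuity of $z\mapsto(z-a)^{-1}$ as a map into the Fr\'echet algebra $T\ill\gh$.

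Compactness of $\gamma$ would then deliver uniform bounds on $\|(z-a)^{-1}\|$ and on each seminorm $\nu_{\alpha,q}((z-a)^{-1})$ for $z\in\gamma$, so the Riemann sums approximating the contour integral form a Cauchy net with respect to every defining seminorm of $T\ill\gh$. By completeness of $T\ill\gh$ (Remark \ref{rem:tinf1}(i)--(ii)) the integral converges to an element of $T\ill\gh$, and by continuity of the inclusion $T\ill\gh\hookrightarrow\csr\gh$ this limit coincides with $f(a)$, whence $f(a)\in T\ill\gh$. The main technical obstacle is producing genuinely uniform control of the countable family $\{\nu_{\alpha,q}\}$ along $\gamma$, which is precisely what the combination of \ref{eq:nuineq2} and \ref{f:submulti} provides; once that is in hand the rest of the argument is the standard Cauchy-integral machinery adapted to a Fr\'echet setting.
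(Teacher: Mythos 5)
Your argument is correct and follows essentially the same route as the paper: both proofs reduce the problem to continuity of the resolvent map $z\mapsto(z-a)^{-1}$ into the Fr\'echet topology of $T\ill\gh$, establish it via the Neumann series $\sum_{n\geq 1}x(\sigma)^n$ with $x(\sigma)=(\sigma_0-\sigma)(\sigma_0-a)^{-1}$ controlled by Proposition \ref{prop:spec} and the estimates \ref{eq:nuineq2} and \ref{f:submulti}, and then conclude by convergence of the Riemann sums of the Cauchy integral in the complete space $T\ill\gh$. Your treatment of the uniformity along the contour is slightly more explicit than the paper's, but it is the same proof.
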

\begin{proof} Let $a\in T\ill\gh$ and let $f$ be a holomorphic
function defined on a neighborhood of $\sigma(a)$, the spectrum of
$a$. If we show that the map $\c \backslash \sigma(a)\rightarrow
T\ill\gh$ defined by $\lambda \mapsto (\lambda-a)^{-1}$ is continuous
in the Fr\'echet topology of $T\ill\gh$, then the Riemann sum of the
integral defining $f(a)$ converges in $T\ill\gh$, and so $f(a)$
belongs to $T\ill\gh$.

Fix $\sigma_0\in \c\backslash \sigma(a)$. For every $\sigma \in
\c\backslash \sigma(a)$, set
$x(\sigma):=(\sigma_0-\sigma)(\sigma_0-a)^{-1}$. If
$|\sigma_0-\sigma|\leq \frac{1}{2}\|\sigma_0-a\|$, then
$\|x(\sigma)\|\leq 1/2$ and by Proposition \ref{prop:spec}
$\sum_{n\geq 1} x(\sigma)^n\in T\ill\gh$. Moreover, for every
$\alpha\in(0,1)$ and positive integer $q$, we have
\begin{eqnarray*}
\nu_{\alpha, q} \left((\sigma_0-a)^{-1}- (\sigma-a)^{-1})\right) &=&
\nu_{\alpha, q} \left( (\sigma_0-a)^{-1}[1- (1-x(\sigma))^{-1}]
\right)\\
&=& \nu_{\alpha, q} \left( (\sigma_0-a)^{-1}\sum_{n\geq 1} x(\sigma)^n \right)\\
\text{ by \ref{f:submulti}}&\leq& \nu_{\alpha/2, q} \left(
(\sigma_0-a)^{-1}\right) \|\sum_{n\geq 1} x(\sigma)^n\| \\
&+& \|(\sigma_0-a)^{-1}\| \nu_{\alpha/2, q} \left( \sum_{n\geq 1}
x(\sigma)^n \right)\\
\text{ by \ref{eq:nuineq2}}&\leq& \nu_{\alpha/2, q} \left(
(\sigma_0-a)^{-1}\right) \|\sum_{n\geq 1} x(\sigma)^n\| \\
&+& \|(\sigma_0-a)^{-1}\| c(\alpha/2,q) \left[\nu_{\alpha/4, q+1}
\left( x(\sigma) \right) + \|x(\sigma)\| \right].
\end{eqnarray*}
Because of $x(\sigma)$, each term in the right hand side of the
above inequality has a factor $|\sigma_0-\sigma|$ and this completes
the proof.
\end{proof}
\begin{corollary} $K_0(T\ill\gh)\simeq K_0(\csr\gh)$.
\end{corollary}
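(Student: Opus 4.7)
The plan is to deduce the $K_0$-isomorphism from the smoothness theorem just established, together with the general principle that a sufficiently well-behaved dense subalgebra of a \cs-algebra has the same $K_0$-group. First I would note the two facts that make $T\ill\gh$ a candidate for this principle: it is norm-dense in $\csr\gh$ (by Remark \ref{rem:tinf1}(iii), $\H\gh\subseteq T\ill\gh$, and $\H\gh$ is dense in $\csr\gh$ by definition), and it is closed under holomorphic functional calculus of $\csr\gh$ by the preceding theorem.

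To pass from smoothness to equality of $K_0$-groups one needs, in the classical formulation, that $T\ill\gh$ be a \emph{local \cs-algebra}, i.e.\ that $M_n(T\ill\gh)$ be smooth in $M_n(\csr\gh)$ for every $n\geq 1$. The natural route is to run the whole argument of Section \ref{sec:rdfunctions} on $\ell^2(H\ba G)\otimes\c^n$ in place of $\ell^2(H\ba G)$, using the amplified projections $P_r\otimes I_n$ and the amplified seminorms
\[
\nu_{\alpha,q}^{(n)}(A):=\sup_{N\geq 1} N^q\bigl(\|(1-P_N\otimes I_n)A(P_{N-N^\alpha}\otimes I_n)\|+\|(P_{N-N^\alpha}\otimes I_n)A(1-P_N\otimes I_n)\|\bigr),
\]
for $A\in M_n(\csr\gh)$. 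The sub-multiplicativity estimate (\ref{f:submulti}), the invertibility argument of Proposition \ref{prop:spec}, and the holomorphic functional calculus argument of the theorem all transport verbatim, since every step depends only on operator-norm estimates and on the commutation of $P_r\otimes I_n$ with the matrix structure. Granted this, the standard fact (Chapter 3 of \cite{bl}) that a dense local \cs-algebra shares $K_0$ with its enveloping \cs-algebra gives the claimed isomorphism.

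The one step requiring genuine care is verifying that the matrix-level seminorms $\nu_{\alpha,q}^{(n)}$ still satisfy a sub-multiplicativity inequality of the form (\ref{f:submulti}); this is where the main technical burden lies. However, because $M_n(\c)$ enters every estimate only as a finite-dimensional constant factor, the inequalities should go through with only cosmetic changes. An alternative, which avoids the matrix amplification altogether, is to invoke Bost's theorem (Theorem A.2.1 of \cite{b}, already cited in the remark near the introduction): since $T\ill\gh$ is a dense Fr\'echet subalgebra of the Banach algebra $\csr\gh$ and has the property of spectral permanence by Proposition \ref{prop:spec}, Bost's framework immediately yields the equality of $K_0$-groups without any matrix-algebra verification.
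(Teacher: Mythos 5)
Your argument is correct in outline, but it takes a genuinely different route from the paper. Where you propose to establish that $M_n(T\ill\gh)$ is smooth in $M_n(\csr\gh)$ by re-running the entire machinery of the section on $\ell^2(H\ba G)\otimes\c^n$ with amplified projections $P_r\otimes I_n$ and amplified seminorms, the paper sidesteps all of this by citing Schweitzer's theorem \cite{sch}, which says precisely that if $A$ is a Fr\'echet algebra with jointly continuous multiplication (guaranteed here by Remark \ref{rem:tinf1}, Parts (i) and (ii)) that is smooth in a Banach algebra, then every $M_n(A)$ is automatically smooth in $M_n$ of that Banach algebra; the $K_0$-isomorphism then follows from \cite{bl}. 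Your amplification route should indeed go through --- the entrywise identity $(1-P_N\otimes I_n)A(P_{N-N^\alpha}\otimes I_n)=\bigl((1-P_N)A_{ij}P_{N-N^\alpha}\bigr)_{ij}$ shows the amplified seminorms are comparable to the maxima of the entrywise ones, so the algebra they cut out really is $M_n(T\ill\gh)$ and the estimates (\ref{f:submulti}) and those of Proposition \ref{prop:spec} transport --- but it is considerably more laborious, and your assurance that the inequalities hold ``with only cosmetic changes'' is exactly the kind of verification Schweitzer's theorem exists to spare you. Your alternative via Bost's Theorem A.2.1 of \cite{b} is also viable and is acknowledged in the paper's introductory remark, but note that spectral permanence alone is not quite the full hypothesis there: Bost's density theorem additionally requires continuity of inversion (or stability under holomorphic functional calculus) in the Fr\'echet topology, which you would extract from the resolvent-continuity estimate in the proof of the smoothness theorem rather than from Proposition \ref{prop:spec} alone.
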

\begin{proof} Due to the facts that  $T\ill\gh$ is a Fr\'echet
algebra, Remark \ref{rem:tinf1} Parts (i) and (ii), and it is a
smooth subalgebra of $\csr\gh$, we can use the result of Larry B.
Schweitzer in \cite{sch} and deduce that $M_n(T\ill\gh)$ is smooth in
$M_n(\csr\gh)$ for all positive integers $n$. Now, the statement
follows from \cite{bl}.
\end{proof}
The above argument can be considered as another proof for the part 4
of the proof of Theorem 1.4 in \cite{j1}.
\begin{corollary}
\label{cor:Kgroups}
If $\gh$ has property (RD) with respect to a length
function $L$, then  the algebra $H\ill\gh$ of rapidly decreasing
functions on $G//H$ with respect to $L$ is a smooth subalgebra of
$\csr\gh$ and the inclusion $H\ill\gh \subset \csr\gh$ gives rises to
the isomorphism $K_0(H\ill\gh)\simeq K_0(\csr\gh)$.
\end{corollary}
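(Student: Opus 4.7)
The plan is to deduce this corollary as a direct consequence of the three results immediately preceding it: Proposition \ref{pro:HT}, the theorem that $T\ill\gh$ is smooth in $\csr\gh$, and the corollary that $K_0(T\ill\gh)\simeq K_0(\csr\gh)$. The essential point is that under the hypothesis of property (RD), the algebras $T\ill\gh$ and $H\ill\gh$ coincide, so every statement established for the Jolissaint algebra transfers verbatim to the algebra of rapidly decreasing functions.

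More concretely, I would first invoke Proposition \ref{pro:HT} to obtain the set-theoretic equality $H\ill\gh = T\ill\gh$ inside $\csr\gh$. Next I would observe that the two natural Fr\'echet topologies on this common algebra are in fact the same: on the one hand, Remark \ref{rem:tinf1}(iv) shows that each Sobolev norm $\|-\|_{s,L}$ is dominated by a finite combination of the seminorms $\nu_{\alpha,q}$ together with the operator norm (via the $O(N^{-q})$ estimates), and on the other hand the computation in the proof of Proposition \ref{pro:HT} exhibits the explicit bound
\[
\nu_{\alpha,q}(\f) \;\leq\; C\,\|\f\|_{s+q/\alpha,\,L}
\]
for a suitable constant $C$ coming from the Haagerup inequality, so the $\|-\|_{s,L}$-topology dominates the Jolissaint topology. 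Thus the identity map $H\ill\gh \to T\ill\gh$ is a homeomorphism of Fr\'echet algebras.

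Given this identification, the theorem preceding the corollary asserts that $T\ill\gh$, hence $H\ill\gh$, is a smooth subalgebra of $\csr\gh$ in the sense of the definition given in the introduction. For the $K$-theoretic statement I would appeal to the corollary $K_0(T\ill\gh)\simeq K_0(\csr\gh)$, which was obtained using Schweitzer's result from \cite{sch} that matrix-stability of smoothness holds for Fr\'echet subalgebras such as $T\ill\gh$. Transferring along the Fr\'echet isomorphism $H\ill\gh\cong T\ill\gh$ and noting that the isomorphism is induced by the inclusion into $\csr\gh$ gives $K_0(H\ill\gh)\simeq K_0(\csr\gh)$, as required.

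The main (and essentially the only) obstacle is the bookkeeping required to verify that the two Fr\'echet structures on $H\ill\gh = T\ill\gh$ really coincide, since the previous results are phrased for $T\ill\gh$ with its seminorms $\nu_{\alpha,q}$ and the norm of $\csr\gh$, whereas $H\ill\gh$ is defined with the Sobolev seminorms. Once the mutual continuity of these two families of seminorms has been extracted from the quantitative estimates already present in Remark \ref{rem:tinf1}(iv) and the proof of Proposition \ref{pro:HT}, the rest of the argument is purely formal: smoothness and matrix-stable smoothness are transported across the Fr\'echet isomorphism, and Schweitzer's theorem plus the standard fact that smoothness yields the same $K_0$-group (as recalled earlier in the paper) complete the proof.
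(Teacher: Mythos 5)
Your proposal is correct and follows essentially the same route the paper (implicitly) takes: the corollary is an immediate combination of Proposition \ref{pro:HT}, the theorem that $T\ill\gh$ is smooth in $\csr\gh$, and the corollary $K_0(T\ill\gh)\simeq K_0(\csr\gh)$. Your additional verification that the two Fr\'echet topologies on $H\ill\gh = T\ill\gh$ agree is more care than the paper itself supplies, and is not strictly needed for the smoothness claim (which is purely algebraic in the paper's definition), but it does make the appeal to Schweitzer's theorem for the $K_0$ statement fully rigorous.
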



\section*{Other methods}
\label{sec:other}

Since Jolissaint's work in \cite{j1}, some new methods have been invented to prove that  $H^\infty_L(G)$ is a smooth subalgebra of $\csr(G)$ if $G$ has property (RD). In this section, we discus this methods.

The first method uses unbounded derivations to define a smooth subalgebra of  $\csr(G)$. It has been used in \cite{cm} as well as Alain Connes' book and some recent articles. Here, we follow the paper \cite{ji} by Ronghui Ji which contains more details. Using a length functions $L$ on $G$, one defines an unbounded operator $d_L:\ell^2(G)\ra \ell^2(G)$ by $d_L(f)(g):= L(g)f(g)$ for all $f\in \ell^2(G)$. It is a closed self-adjoint unbounded operator. Therefore the map $\delta_L :B(\ell^2(G))\ra B(\ell^2(G))$ defined by
\[
\delta_L(T):= i[d_L, T], \quad \forall T\in B(\ell^2(G))
\]
is a closed unbounded $\ast$-derivation. Let us denote the domain of an arbitrary unbounded operator (derivation) $d$ by $D(d)$. Then it is easy to see that $\c G$ is contained in $\bigcap_{k=1}^\infty D(\delta_L^k)$. Set $S_L(G):= \bigcap_{k=1}^\infty D(\delta_L^k) \cap \csr(G)$. Since $S_L(G)$ is a dense $\ast$-subalgebra of $\csr(G)$, the following theorem is applied to show that $S_L(G)$ is actually a smooth subalgebra of $\csr(G)$.

\begin{theorem} (Theorem 1.2 of \cite{ji})
\label{thm:dersmoothsub}
Let $B$ be a \cs-algebra and $A$ a \cs-subalgebra of $B$. Let $\delta :B\ra B$ be a closed unbounded derivation. Then $\bigcap_{k=1}^\infty D(\delta^k) \cap A$ is a subalgebra of $A$. Moreover, it is a smooth subalgebra if it is dense.
\end{theorem}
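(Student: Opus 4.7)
The plan is to separate the algebraic closure of $\mathcal{A}:=\bigcap_{k\geq 1}D(\delta^k)\cap A$ under products from the analytic stability under holomorphic functional calculus, and to obtain the latter by combining spectral permanence with a suitable Fr\'echet topology. For the subalgebra claim I would prove by induction on $k$ that $D(\delta^k)$ is a subalgebra of $B$ on which the higher Leibniz rule
\[
\delta^k(ab)=\sum_{j=0}^{k}\binom{k}{j}\delta^{j}(a)\delta^{k-j}(b)
\]
holds. The base case $k=1$ is the defining property of a derivation. For the inductive step, given $a,b\in D(\delta^k)$, the inductive hypothesis writes $\delta^{k-1}(ab)$ as a sum of products $\delta^j(a)\delta^{k-1-j}(b)$ with $j,k-1-j\leq k-1$; since each factor is in $D(\delta)$ (because $a\in D(\delta^{j+1})$ with $j+1\leq k$, and similarly for $b$) and $D(\delta)$ is itself a subalgebra, each summand lies in $D(\delta)$, the whole sum lies in $D(\delta)$, and applying the Leibniz rule termwise shows $ab\in D(\delta^k)$ with the displayed formula. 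Intersecting with the $\ast$-subalgebra $A$ preserves the subalgebra structure, yielding the first assertion.

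Assume now $\mathcal{A}$ is dense in $A$. I would equip it with the increasing family of seminorms $p_k(a):=\sum_{j=0}^{k}\|\delta^j(a)\|$. Completeness in the induced locally convex topology follows from closedness of each $\delta^j$, so $\mathcal{A}$ becomes a Fr\'echet algebra, and the higher Leibniz rule yields submultiplicative bounds $p_k(ab)\leq 2^{k}p_k(a)p_k(b)$. To upgrade this to a smooth subalgebra, I would follow the remark from the introduction: it suffices to verify spectral permanence in $A$ together with convergence of the Cauchy integral of holomorphic functional calculus in the Fr\'echet topology of $\mathcal{A}$. For spectral permanence, given $a\in\mathcal{A}$ invertible in $A$, the formal identity $\delta(a^{-1})=-a^{-1}\delta(a)a^{-1}$ should be made rigorous to place $a^{-1}$ in $D(\delta)$; then induction on $k$, applied to the resulting formula expressing $\delta^k(a^{-1})$ as a polynomial in $a^{-1}$ and the $\delta^j(a)$ (all of which lie in $\mathcal{A}$), propagates $a^{-1}\in D(\delta^k)$ to every $k$. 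Once spectral permanence is in place, the map $\lambda\mapsto(\lambda-a)^{-1}$ is continuous from $\c\setminus\sigma(a)$ to $\mathcal{A}$ in each seminorm $p_k$ (by differentiating the resolvent identity), so the Riemann sums approximating
\[
f(a)=\frac{1}{2\pi i}\oint_\Gamma f(\lambda)(\lambda-a)^{-1}\,d\lambda
\]
converge in $\mathcal{A}$ for any $f$ holomorphic on a neighbourhood of $\sigma(a)$.

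The main obstacle is the rigorous verification that $a^{-1}\in D(\delta)$ whenever $a\in D(\delta)$ is invertible in $B$; closedness of $\delta$ alone does not deliver this without an explicit approximating sequence. A clean route is to use density of $\mathcal{A}$ to pick $y_n\in\mathcal{A}$ with $y_n\to a^{-1}$ in norm, then form the Newton-type correction $x_n:=y_n(2-ay_n)\in\mathcal{A}\subseteq D(\delta)$, for which $\mathbf{1}-ax_n=(\mathbf{1}-ay_n)^{2}\to 0$, hence $x_n\to a^{-1}$ in norm as well. Expanding $\delta(x_n)$ via the Leibniz rule and using that $\delta(a)$ is a fixed element of $B$, one checks that $\delta(x_n)$ converges in norm to $-a^{-1}\delta(a)a^{-1}$; closedness of $\delta$ then forces $a^{-1}\in D(\delta)$ with the expected formula, after which the induction described above completes the proof.
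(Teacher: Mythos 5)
The paper offers no proof of this statement to compare against: it is quoted as Theorem 1.2 of \cite{ji} and used as a black box, so your attempt has to stand on its own. The first half does: the induction giving the higher Leibniz rule on $D(\delta^k)$, hence that $\bigcap_k D(\delta^k)\cap A$ is a subalgebra, is correct, and so is completeness of the topology defined by the $p_k$ (one only needs to iterate closedness of $\delta$ itself, not closedness of each power $\delta^j$, which you should not assert).

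The gap is exactly where you flagged it, and your proposed repair does not close it. Expanding $x_n=y_n(2-ay_n)$ gives $\delta(x_n)=\delta(y_n)(1-ay_n)+(1-y_na)\delta(y_n)-y_n\delta(a)y_n$. The last term does converge to $-a^{-1}\delta(a)a^{-1}$, but the first two contain $\delta(y_n)$, and since $\delta$ is unbounded and the $y_n$ were chosen only to converge in the $C^{\ast}$-norm, $\|\delta(y_n)\|$ is completely uncontrolled and can outrun the factors $1-ay_n\to 0$; so $\delta(x_n)$ need not converge and closedness cannot be invoked. Two standard repairs: (a) iterate the Newton map from a single seed, $x_{n+1}:=x_n(2-ax_n)$ with $x_0\in\mathcal{A}$ and $\|1-ax_0\|<1$, so that $\|1-ax_n\|=\|1-ax_0\|^{2^n}$ and $\|1-x_na\|=\|a^{-1}(1-ax_n)a\|$ decay summably; the resulting recursion $\|\delta(x_{n+1})\|\leq c_n\|\delta(x_n)\|+\|x_n\|^2\|\delta(a)\|$ with $\sum_n c_n<\infty$ keeps $\|\delta(x_n)\|$ bounded, after which the error terms genuinely vanish and closedness applies; or (b) the classical route: for a positive invertible $h\in D(\delta)$ use the Neumann series $h^{-1}=\|h\|^{-1}\sum_n(1-h/\|h\|)^n$, whose partial sums lie in $D(\delta)$ with $\|\delta(c^n)\|\leq n\|c\|^{n-1}\|\delta(c)\|$ giving a summable bound, then closedness; finally $a^{-1}=(a^{\ast}a)^{-1}a^{\ast}$, which requires $D(\delta)$ to be $\ast$-invariant (true for the $\ast$-derivations $i[d_L,\cdot]$ the paper actually uses, and implicit in Ji's setting). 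Either way you also need $1\in D(\delta)$ with $\delta(1)=0$, which should be stated. Once spectral permanence is secured, your continuity of $\lambda\mapsto(\lambda-a)^{-1}$ in each $p_k$ and the convergence of the Cauchy integral go through as sketched.
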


Regarding this theorem, it is enough to show that when $G$ has property (RD), $S_L(G)$ and $H^\infty_L(G)$ are actually the same Fr\'echet algebras. This was done in Theorem 1.3 of \cite{ji}.

In order to extend the above argument to the frame work of Hecke pairs, one has to replace $\ell^2(G)$ by $\ell^2(H\ba G)$ and define an unbounded operator $d_L$ and an unbounded derivation $\delta_L$ similarly. The rest is just an adaptation of Ronghui Ji's argument for Hecke pairs.

The second method is due to Vincent Lafforgue. In Proposition 1.2 of \cite{l2}, It is shown directly that if $G$ has property (RD) with respect to a length function $L$, then there is a positive real number $s$ such that $H^s_L(G)$ is a Banach algebra and is a smooth subalgebra of $\csr(G)$. Again Lafforgue's argument can be generalized for Hecke pairs too.


\noindent {\bf Acknowledgements.} I would like to thank both the referees of this paper for their helpful comments and suggestions. Their comments have enriched this paper significantly and stimulated some ideas which will be followed in my future papers in this subject.

\bibliographystyle {amsalpha}
\begin {thebibliography} {VDN92}

\bibitem{ach} {\bf C. Antonescu, E. Christensen,} Metrics on group \cs-algebras and a non-commutative Arzel\`{a}-Ascoli theorem, J. of Functional Analysis, 214 (2004) 247--259.

\bibitem{ar} {\bf W. Arveson,} A short course on spectral theory. Graduate texts in mathematics, vol. 209, Springer-Verlag New York, Berlin, Heidelberg, (2001).

\bibitem{bec1} {\bf J. J. Becnel,} Countably normed spaces, their dual and Gaussian measures. (Preprint ArXiv: math.0407200)

\bibitem{bl} {\bf B. Blackadar,} $K$-theory for operator algebras. Mathematical Sciences Research Institute Publications, vol. {\bf 5} Cambridge University press, second edition, (1998).

\bibitem{blcu} {\bf B. Blackadar, J. Cuntz,} Differential Banach algebra norms and smooth subalgebras of C*-algebras, J. Operator Theory {\bf 26} (1991), 255--282.

\bibitem{b}{\bf J.-B. Bost,} Principe d'Oka, K-th\'{e}orie et syst\`{e}mes dynamiques non commutatifs. Invent. Math., 101 (2), (1990), 261--333.

\bibitem{bc}{\bf J.-B. Bost, A. Connes,} Hecke algebras, type III factors and phase transition with spontaneous symmetry breaking in number theory. Selecta Math. (New Series) Vol.{\bf 1}, no.3, 411-456, (1995).

\bibitem{bump} {\bf D. Bump,} Automorphic forms and representations. Cambridge Studies in Advanced Mathematics: {\bf 55} Cambridge University Press (1998).

\bibitem{chat} {\bf I. L. Chatterji,} On property (RD) for certain discrete groups. PhD Thesis, Swiss Federal Institute of Technology Zurich, (2001).

\bibitem{chatru} {\bf I. L. Chatterji, K. Ruane,} Some geometric groups with rapid decay. Geom. funct. anal. vol. {\bf 15}, (2005), 311--339.

\bibitem{c85} {\bf A. Connes,} Non-commutative differential geometry. Inst. Hautes \'{E}tudes Sci. Publ. Math. No. 62, (1985), 257--360.

\bibitem{c94} {\bf A. Connes,} Noncommutative geometry. Academic Press, Inc. (1994).

\bibitem{cm} {\bf A. Connes, H. Moscovici,} Cyclic cohomology, the Novikov conjecture and hyperbolic groups.  Topology  {\bf 29} (1990), no. 3, 345--388.

\bibitem{fo} {\bf G. B. Folland,} Real analysis, Modern techniques and their applications. second edition, John  Wiley \& Sons, Inc. (1999).

\bibitem{haag1} {\bf U. Haagerup,} An example of a nonnuclear \cs-algebra, which has the metric approximation property. Invent. Math. 50 (1979), 279--293.

\bibitem{Hall} {\bf R. W. Hall,} Hecke \cs-algebras. PhD thesis, The Pennsylvania State University, (1999).

\bibitem{harpe} {\bf P. de la Harpe,} Groupes hyperboliques, alg\`{e}bres d'op\'{e}rateurs et un th\'{e}or\`{e}me de Jolissaint. C. R. Acad. Sci. Paris S\'{e}r. I, vol. {\bf 307}, (1988), 771--774.

\bibitem{ji} {\bf R. Ji,} Smooth dense subalgebras of reduced group \cs-algebras, Schwartz cohomology of groups, and cylic cohomology. Journal of Functional analysis {\bf 107}, (1992) 1--33.

\bibitem{j1} {\bf P. Jolissaint,} $K$-theory of reduced \cs-algebras and rapidly decreasing functions on groups. Journal of $K$-Theory {\bf 2}, (1989) 723--735.

\bibitem{j2} {\bf P. Jolissaint,} Rapidly decreasing functions in reduced \cs-algebras of groups. Trans. Amer. Math. Soc. Vol. {\bf 317}, (1990), no. 1, 167--196.

\bibitem{l2} {\bf V. Lafforgue,} A proof of property (RD) for discrete cocompact subgroups of $SL_3(\r)$. Journal of Lie Theory, vol. {\bf 10}, (2000), 255--267.

\bibitem{l1} {\bf V. Lafforgue,}  $K$-th\'{e}orie bivariante pour les alg\`{e}bres de Banach et conjecture de Baum-Connes.  Invent. Math.  {\bf 149},  (2002),  no. 1, 1--95.

\bibitem{sch} {\bf L. B. Schweitzer,} A short proof that $M\sb n(A)$ is local if $A$ is local and Fr\'echet.  Internat. J. Math. {\bf 3}  (1992),  no. 4, 581--589.

\bibitem{s1} {\bf V. Shirbisheh,} Various commensurability relations in Hecke pairs and property (RD). arXiv:1205.2819, (2012).

\bibitem{tzanevthesis} {\bf K. Tzanev,} \cs-alg\`{e}bres de Hecke et $K$-th\'{e}ories. Th\`{e}se de doctorat, Universit\'{e} Paris 7, (2000).

\bibitem{tzanev} {\bf K. Tzanev,} Hecke \cs-algebras and amenability,  J. Operator Theory, {\bf 50}, (2003), 169--178.

\bibitem{valette} {\bf A. Valette,} On the Haagerup inequality and groups acting on $\widetilde{A}_n-$buildings. Ann. Inst. Fourier, Grenoble {\bf 47}, (1997), 4, 1195--1208.

\bibitem{ve} {\bf R. Vergnioux,} The property of rapid decay for discrete quantum groups. J. Operator Theory, {\bf 57}, (2007), no. 2, 303--324.
\end {thebibliography}
\end{document}